\documentclass{amsart}
  \usepackage{amscd,amssymb,epsfig, epsf}
   \usepackage{epic,eepic}

 \oddsidemargin -0.6cm
                     \evensidemargin -0.6cm
                     \topmargin -1.0cm
                     \headheight 1cm
                     \headsep .5cm
                     \textwidth 12.4cm
                     \textheight 19cm

                     \numberwithin{equation}{subsection}

                     \newtheorem{propo}{Proposition}[section]
                     
                  \newtheorem{theor}[propo]{Theorem}
                     \newtheorem{lemma}[propo]{Lemma}
                     \theoremstyle{definition}

                     \theoremstyle{remark}

             \newcommand{\Lu}{\mathcal{L}}

             \newcommand{\GK}{\mathcal{G} }
                     \newcommand{\ZZ}{\mathbb{Z}}
                     \newcommand{\RR}{\mathbb{R}}

             \newcommand{\Au}{\mathcal{A}}
               \newcommand{\BB}{\mathcal{B}}
            
 \newcommand{\Su}{\mathcal{S}}

             \newcommand{\Ker}{\operatorname{Ker}}

\newcommand{\refo}{\operatorname{sym}}
\newcommand{\cro}{\operatorname{cr}}
\newcommand{\spn}{\operatorname{spn}}

              \newcommand{\card}{\operatorname{card}}
              
                     \newcommand{\id}{\operatorname{id}}

            \newcommand{\modu}{\operatorname{mod}}
\newcommand{\Int}{\operatorname{Int}}

               \begin{document}
      \title{Knotoids}
                     \author[Vladimir Turaev]{Vladimir Turaev}
                     \address{%
              Department of Mathematics, \newline
\indent  Indiana University \newline
                     \indent Bloomington IN47405 \newline
                     \indent USA \newline
\indent e-mail: vtouraev@indiana.edu} \subjclass[2010]{
 57M25, 57M27}

                     \begin{abstract}
We introduce and study knotoids. Knotoids are  represented by
diagrams in a surface which differ from the usual knot diagrams in
that the underlying curve is a segment rather than a circle. Knotoid
diagrams are considered up to Reidemeister moves applied away from
the endpoints of the underlying segment.  We show that knotoids in
$S^2$ generalize knots in $S^3$ and   study the semigroup of
knotoids. We also discuss  applications to knots  and invariants of
knotoids.

                    \end{abstract}
                     \maketitle

  \section {Introduction}

Drawing a   diagram of a   knot  may be a   complicated task,
especially when the number of crossings is big. This paper was born
from the observation that one (small) step in the process of drawing
may be skipped. It is not really necessary for the underlying curve
of the diagram to be closed, i.e., to begin and to end at the
same point. A  curve $K\subset S^2$  with over/under-crossing data
and  distinct endpoints   determines a knot in $S^3$ in a canonical
way. Indeed, let us connect the endpoints of~$K$ by an arc in  $S^2$
running under the rest of $K$. This yields a usual knot diagram
in~$S^2$. It is easy to see that  the knot in $S^3$ represented by
this diagram   does not    depend on the choice of the arc and is
entirely determined by $K$. The actual drawing of the arc in
question is
 unnecessary.   This   suggests to consider  \lq\lq open" knot diagrams   which differ from the usual ones in that the
  underlying curve is an interval rather than a circle. We call such open diagrams {\it knotoid diagrams}. They yield a new, sometimes simpler way to present knots and also lead  to an elementary but possibly useful improvement    of the standard Seifert estimate  from above for
   the  knot genus.

 The study of  knotoid  diagrams also suggests a notion of a knotoid. Knotoids are defined as equivalence classes of  knotoid diagrams modulo the  usual  Reidemeister moves applied away
    from the endpoints.      We show that knotoids in $S^2$ generalize knots in $S^3$ and
     introduce and study a semigroup of knotoids in $S^2$ containing the usual semigroup of knots as the center.
     We   also discuss an extension  of several   knot invariants to knotoids.

The concept of a  knotoid  may be viewed as a generalization of the concept of a \lq\lq long knot" 
on   $\RR^2$.  More general    \lq\lq mixtures"   formed by    closed and open knotted  curves on the plane    were  introduced  by 
S. Burckel \cite{Bu} in 2007.

 The paper is organized as follows.   In Section \ref{section0} we introduce knotoid diagrams and discuss their applications to knots. We introduce knotoids in
 Section \ref{section1} and study the semigroup of knotoids   in  Sections
 \ref{section2}--\ref{section4}. The    properties of this semigroup  are formulated in Section
 \ref{section2} and  proved in Section \ref{section4}, where we use
   the technique of  theta-curves detailed  in  Section \ref{section3}.
  Sections \ref{section77} and \ref{section67}   deal with the bracket polynomial     of knotoids.
  The last two sections are concerned with skein modules of knotoids and  with  knotoids in~$\RR^2$.

  This work   was partially supported by the NSF
  grant  DMS-0904262. The author is
  indebted to Nikolai Ivanov for helpful discussions.

  \section{Knotoid diagrams and knots}\label{section0}

   \subsection{Knotoid diagrams}\label{kd1-----}  Let $\Sigma$ be a surface.  A {\it knotoid diagram} $K$  in~$\Sigma$
  is a generic immersion of the interval $[0,1]$ in the interior of $\Sigma$ whose only singularities are transversal
   double points endowed with over/undercrossing data. The images of   $0$ and~$1$ under this
   immersion are called the    {\it  leg} and the    {\it  head} of $K$, respectively.
   These two points are   distinct from each  other and from the double points; they are    called the {\it endpoints} of~$K$. We  orient $K$ from the leg to the
   head. The double points  of $K$ are    called the {\it crossings} of
   $K$. By abuse of notation, for a knotoid diagram~$K$ in~$\Sigma$, we write $K\subset \Sigma$.
  Examples of knotoid diagrams  in $S^2$  with $\leq 2$
    crossings are shown in
   Figures~\ref{fig1} and~\ref{fig2} below.

\begin{figure}[h,t]
\includegraphics[width=12cm,height=3.3cm]{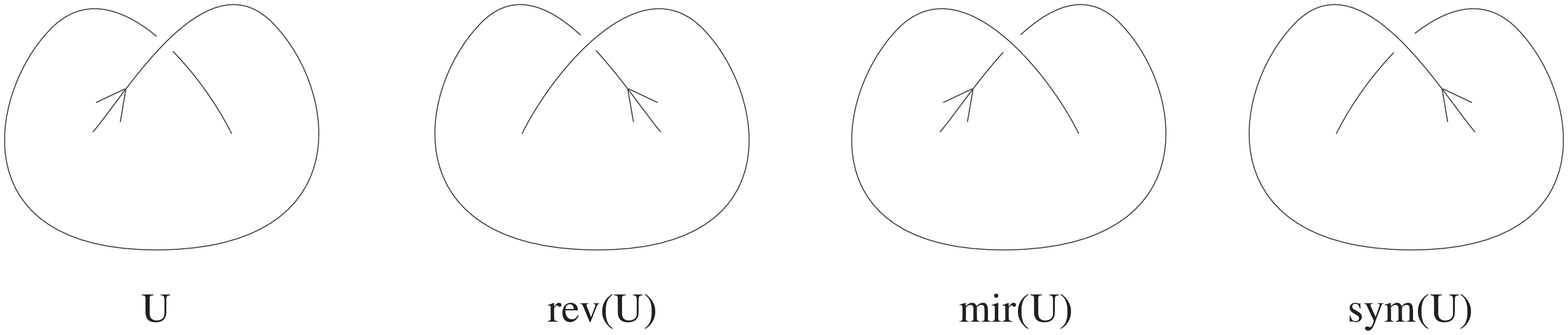}
\caption{The unifoils}\label{fig1}
\end{figure}

\begin{figure}[h,t]
\includegraphics[width=9cm,height=3.3cm]{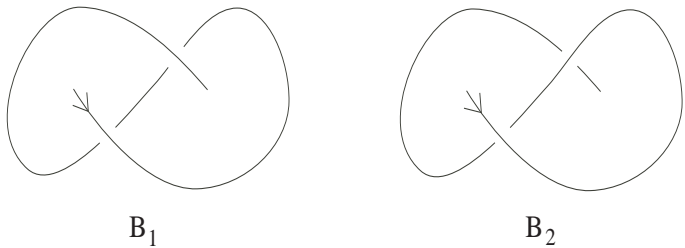}
\caption{The bifoils}\label{fig2}
\end{figure}

   Two  knotoid
   diagrams $K_1$ and $K_2$ in~$\Sigma$ are (ambient)  isotopic if there is an isotopy of $\Sigma$ in itself transforming $K_1$
     in $K_2$.
    Note that an isotopy of a knotoid
   diagram   may displace the endpoints.

  We define  three  {\it Reidemeister moves} $\Omega_1, \Omega_2, \Omega_3$ on   knotoid
   diagrams in~$\Sigma$.  The move $\Omega_i$ on a
   knotoid diagram $K\subset \Sigma$
   preserves~$K$ outside a closed 2-disk in~$\Sigma$ disjoint
   from the endpoints and modifies $K$ within this
   disk   as the standard  $i$-th Reidemeister move, for $i=1,2,3$  (pushing a branch of $K$ over/under the endpoints is not
   allowed).

   We introduce two
   more   moves on knotoid diagrams. The move $\Omega_-$ (resp.\ $\Omega_+$) pulls     the strand adjacent to the head or the leg under (resp.\ over) a transversal strand,
    see Figure~\ref{fig3}.  These moves reduce the number of crossings by $1$. Applying  $\Omega_\mp$,  we can transform any knotoid diagram
    in the   trivial  one represented by an
    embedding of $[0,1]$ in the interior of $\Sigma$.

    \begin{figure}[h,t]
\includegraphics[width=9cm,height=3.0cm]{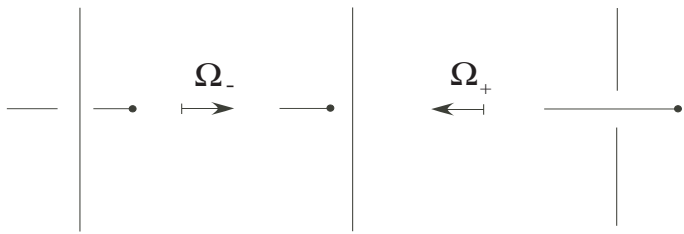}
\caption{The moves $\Omega_-$ and $\Omega_+$}\label{fig3}
\end{figure}

More general  {\it multi-knotoid diagrams} in~$\Sigma$ are defined
as
  generic immersions  of   a single oriented  segment and several
oriented circles  in~$\Sigma$ endowed with over/under-crossing data.
  Though most of the theory below
extends to multi-knotoid diagrams, we shall mainly focus on  knotoid
diagrams.

 \subsection{From  knotoid diagrams to knots}\label{section4.1}
   The theory of   knotoid diagrams suggests a new diagrammatic approach to   knots. Unless explicitly stated to the contrary, by a knot   we
   mean an isotopy class  of smooth embeddings of an oriented circle into $\RR^3$ or, equivalently,  into $S^3=\RR^3\cup \{\infty\}$.
Every knotoid diagram  $K\subset S^2$ determines a knots $K_-\subset
S^3$. It is defined as follows.  Pick an embedded arc $a\subset S^2$
connecting
  the endpoints of $K$ and otherwise meeting~$K$ transversely at a
  finite set of points distinct from the crossings of $K$. (We call such an arc  a {\it shortcut} for $K$.)   We turn   $K\cup a$ into a knot
  diagram     by declaring that $a$ passes everywhere under   $K$. The orientation of $K$ from the  leg to the head defines an
  orientation of $K\cup a$.   The  knot in $   S^3$   represented by   $K\cup a$ is denoted~$K_{-}$; we say that $K$   represents
 $K_-$ or that $K$ is a knotoid diagram of  $K_-$.
  The knot~$K_{-}$
    does  not depend on the choice of the shortcut $a$
  because any  two  shortcuts for $K$ are isotopic in  the class of embedded arcs in  $ S^2$ connecting
  the endpoints of~$K$. These isotopies
induce isotopies and Reidemeister moves on  the corresponding knot
diagrams~$K\cup a$.

  It
  is clear that every knot $\kappa\subset S^3$ may be represented by a knotoid diagram. Indeed, take a (usual) knot diagram of
  $\kappa$ and cut out  an   underpassing strand. The strand may contain
  no crossings, or 1 crossing, or $\geq 2$ crossings.
  In all cases we obtain a knotoid diagram of~$\kappa$.
It is clear that two knotoid diagrams represent isotopic knots
  if and only if these diagrams may be related by   isotopy in~$S^2$,
  the Reidemeister moves (away from the endpoints), and the moves $\Omega_-^{\pm
  1}$.

  Alternatively, one can start  with a knotoid diagram
  $K\subset S^2$ and consider the knot diagram
   obtained from $K$   by adjoining a shortcut for $K$ passing  {\it over}~$K$. This yields a knot $K_+\subset S^3$. In this context,  the
  moves $\Omega_-^{\pm 1}$ become  forbidden  and   $\Omega_+^{\pm 1}$
  allowed.

The diagrammatic approach to knots based on knotoid diagrams extends
to oriented links in $S^3$ through the use of multi-knotoid diagrams
in~$S^2$.

     \subsection{Computation of the knot group}\label{section4.2}
     Since a knotoid diagram $K\subset S^2$ fully  determines the knot    $ K_-\subset  S^3$,
     one should be able to read all invariants of this knot directly
     from $K$. We   compute here the group
     $\pi_1(S^3-K_-)$
     from $K$.

  Similarly to  the Wirtinger presentation in the theory of   knot diagrams, we associate with every knotoid diagram $K$ in an oriented surface $\Sigma$
   a {\it knotoid
  group} $\pi(K)$. This group is  defined by generators and
  relations. Observe that
   $K$ breaks at its crossings into a disjoint union of embedded \lq\lq overpassing"  segments  in $\Sigma$.
   The generators of $\pi(K)$ are associated with
   these segments. (The  generator associated with a segment is  usually represented   by a small arrow  crossing the segment
    from the right to the left.) We impose on these generators the standard Wirtinger   relations associated   with the crossings of $K$, see
     \cite{li},  p.\
   110. If $K$ has $m$ crossings, then we obtain $m+1$ generators  and $m$ relations.
  The resulting group
    $\pi(K)$ is preserved under isotopy     and   the moves $\Omega_1, \Omega_2, \Omega_3, \Omega_-$ on $K$. For example, if   $K$ is a trivial knotoid diagram, then $\pi(K)\cong \ZZ$.

     \begin{lemma}\label{le4} For any knotoid diagram $K\subset S^2$ of  a knot $\kappa \subset S^3$,
   \begin{equation}\label{groups} \pi(K)\cong \pi_1(S^3-\kappa). \end{equation}
\end{lemma}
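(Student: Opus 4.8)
The plan is to compare the defining presentation of $\pi(K)$ with the Wirtinger presentation of the knot diagram $D=K\cup a$, where $a$ is a shortcut for $K$ running everywhere under $K$. Since the knot $\kappa=K_-$ does not depend on the choice of shortcut, I am free to choose $a$ meeting $K$ transversally in $n\geq 1$ points (an embedded arc from the leg to the head can always be routed to cross $K$ at least once). Then $D$ is a genuine knot diagram of $\kappa$, so $\pi_1(S^3-\kappa)$ is computed by the Wirtinger presentation of $D$, and the remaining task is purely algebraic: to reduce that presentation to the one defining $\pi(K)$.

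First I would read off the generators and relations of the Wirtinger presentation of $D$. The crossings of $D$ are the $m$ self-crossings of $K$ together with the $n$ points of $a\cap K$; because $a$ passes under $K$, the $K$-strand is the overstrand at each of the latter. Hence the overpassing arcs of $D$ are exactly the $m+1$ overpassing segments $g_0,\dots,g_m$ of $K$ (the generators of $\pi(K)$, with the leg on $g_0$ and the head on $g_m$) together with the $n-1$ pieces of $a$ lying strictly between consecutive points of $a\cap K$; note that the two extreme pieces of $a$ carry no crossing at the endpoints and are therefore parts of $g_m$ and $g_0$. The relations split accordingly: the $m$ relations at the self-crossings of $K$ are word-for-word the defining relations $R_1,\dots,R_m$ of $\pi(K)$, while the $n$ relations at the points of $a\cap K$ each express one piece of $a$ as the conjugate of the adjacent piece by the meridian $g_i$ of the overpassing $K$-segment.

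Next I would eliminate the auxiliary generators coming from $a$. Read along $a$ from the head to the leg, the $n$ relations at $a\cap K$ form a chain linking $g_m$, the $n-1$ interior pieces, and $g_0$ in succession. By the standard redundancy of the Wirtinger presentation of a knot — any single relation is a consequence of the remaining ones — I may discard the last relation of this chain. Using the other $n-1$ relations in order, each interior piece of $a$ is expressed as an explicit word in $g_0,\dots,g_m$, and Tietze transformations remove these $n-1$ generators together with the $n-1$ relations used. What survives is precisely the presentation $\langle g_0,\dots,g_m \mid R_1,\dots,R_m\rangle$, i.e. $\pi(K)$, which yields the isomorphism \eqref{groups}.

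The one point demanding care — and the likely main obstacle — is the redundancy step: I must ensure that the relation discarded can be taken to be one of the $n$ relations coming from $a$, so that all of $R_1,\dots,R_m$ genuinely survive. This is exactly what the classical fact that any one Wirtinger relator lies in the normal closure of the others provides, but it should be invoked with attention to which relator is dropped (equivalently, one checks directly that after eliminating the interior pieces of $a$ the single leftover $a$-relation is a consequence of $R_1,\dots,R_m$). The rest of the bookkeeping — the merging of the extreme pieces of $a$ into $g_0$ and $g_m$, the matching of generator and relation counts ($m+n$ on each side before reduction), and the identification of the self-crossing relations of $D$ with those of $\pi(K)$ — is routine.
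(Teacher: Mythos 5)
Your proof is correct, but it takes a genuinely different route from the paper's. The paper first normalizes the diagram: using the invariance of $\pi(K)$ under the move $\Omega_-$ (asserted just before the lemma), it slides the endpoints together until they admit a shortcut $a$ \emph{disjoint} from $K$; then $K\cup a$ is a knot diagram whose Wirtinger presentation is exactly the presentation of $\pi(K)$ with one extra relation $g=h$ identifying the two end generators, and the whole content of the proof is the geometric observation that $g=h$ holds \emph{already} in $\pi(K)$, shown by pushing a small meridian arrow representing $g$ across the entire sphere $S^2$ and applying the crossing relations along the way. You instead keep $K$ fixed, take a shortcut meeting $K$ transversally in $n\geq 1$ points, and carry out a purely presentation-theoretic comparison: Tietze elimination of the $n-1$ interior $a$-generators plus the classical redundancy of any one Wirtinger relator for a connected diagram in $S^2$. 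It is worth noting that the two arguments rest on the same topological input and that your ``likely main obstacle'' is precisely the paper's key step in disguise: the redundancy of a Wirtinger relator is proved by the same sphere-at-infinity principle (the product of suitably conjugated relators encircling the region containing $\infty$ is trivial in the free group) that the paper deploys directly, and your leftover $a$-relation, after elimination, says $g_0$ is a conjugate of $g_m$ in $\pi(K)$ --- essentially the paper's $g=h$. What your route buys: no need for the $\Omega_-$-invariance of $\pi(K)$ or the normalization step, and the sphere argument is black-boxed into a standard citation; it also handles the crossingless case $m=0$ uniformly (there $g_0=g_m$ is a single generator, and your bookkeeping still goes through), whereas the paper sets that case aside. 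What the paper's route buys: minimal bookkeeping (a single extra relation instead of a chain of $n$), a self-contained geometric argument, and a more transparent view of exactly where the hypothesis $\Sigma=S^2$ --- as opposed to an arbitrary surface, where the lemma fails --- enters.
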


\begin{proof}  It suffices to consider the case where $K$ has at least one crossing.
 Applying $\Omega_-^{-1}$ to $K$ several times, we can transform $K$ into a knotoid
diagram whose  endpoints lie close to each other, i.e.,  may be
connected by an arc  $a\subset S^2$ disjoint from the rest of $K$.
Then  $K\cup a$ is a knot diagram  of $\kappa=K_-$. The presentation
of $\pi(K)$   above differs from the Wirtinger presentation of
$\pi_1(S^3-\kappa)$ determined by the knot diagram $K\cup a$ in only
one aspect: the segments of~$K$ adjacent to the endpoints contribute
  different generators $g $, $h$ to the set of generators
of~$\pi(K)$. In the diagram $K\cup a$ these two segments are united
and contribute the same generator to the Wirtinger presentation.
Therefore $\pi_1(S^3-\kappa)$ is the quotient of $\pi(K)$ by the
normal subgroup generated by $g h^{-1}$. However, $g =h$ in
$\pi(K)$. Indeed, pushing a small arrow representing~$g $ across the
whole sphere $S^2$  while fixing the endpoints of the arrow and
using the relations in $\pi(K)$, we can obtain the arrow
representing $h$. Thus,   $\pi(K)\cong \pi_1(S^3-\kappa)$.
\end{proof}

A similar method allows one to associate with any knotoid diagram a
{\it knotoid quandle}, generalizing the knot quandle due to D.~Joyce
and S.~Matveev.

 \subsection{The crossing numbers}\label{section4.3}   The crossing number $\cro (\kappa)$ of a knot $\kappa \subset  S^3$ is defined as  the
 minimal number of crossings in a knot diagram of $\kappa$.
  One can use   knotoid diagrams to define two similar invariants $\cro_{\pm}(\kappa)$.  By definition,  $\cro_{\pm}(\kappa)$   is  the minimal number of crossings of a knotoid diagram $K$ such that
     $K_{\pm}=\kappa$.   Clearly,
     $\cro_{+}(\kappa)=\cro_{-}({\rm mir}
     (\kappa))$, where ${\rm mir}
     (\kappa)$ is the mirror image of $\kappa$.

     Note   that $\cro_{-}(\kappa)\leq \cro (\kappa)-1$.
This follows from the fact that a knotoid diagram of~$\kappa$ can be
obtained from a
     knot diagram of $\kappa$ with  minimal number of crossings by cutting out  an underpass
     containing one crossing. Moreover, if  a minimal diagram of  $\kappa$ has an underpass with
      $N\geq 2$ crossings, then $\cro_- (\kappa)\leq \cro (\kappa)-N$.     Similarly, $\cro_{+}(\kappa)\leq \cro
     (\kappa) -1$ and if  a minimal diagram of  $\kappa$ has an overpass with $N\geq 2$ crossings, then $\cro_+ (\kappa)\leq \cro (\kappa)-N$.



\subsection{Seifert surfaces}\label{section4.4} Recall
the     construction of a Seifert surface of a knot $\kappa$ in~$
S^3$ from a knot diagram $D$ of $\kappa$. Every crossing of $D$
admits a unique smoothing compatible with the orientation of
$\kappa$. Applying these smoothings to all crossings of~$D$, we
obtain
  a   closed oriented 1-manifold  $\widehat D\subset S^2$. This $\widehat D$ consists of
several  disjoint simple closed curves and bounds a system of
disjoint disks in $S^3$ lying   above~$S^2$. These disks together
with half-twisted strips at the crossings form a compact connected
orientable surface in $  S^3$ bounded by $\kappa$. The genus of this
surface is equal to $(\cro (D)- \vert \widehat D\vert +1)/2$, where
$\cro (D)$ is the number of crossings of~$D$ and $\vert \widehat
D\vert$ is the number of components of $\widehat D$. This yields an
estimate from above for the Seifert genus $g(\kappa)$ of $\kappa$:
\begin{equation}\label{seif} g(\kappa)\leq (\cro (D)- \vert \widehat D\vert +1)/2. \end{equation}

An analogous procedure   applies to a  knotoid diagram $K$ of
$\kappa$. Every crossing of $K$ admits a unique smoothing compatible
with the orientation of $K$ from the leg to the head. Applying these
smoothings to all crossings, we obtain an
 oriented 1-manifold  $\widehat K\subset S^2$.
This $\widehat K$ consists of an oriented interval $J\subset S^2$
(with the same endpoints as $K$) and  several  disjoint simple
closed curves. The  closed curves bound  a system of disjoint disks
in $S^3$ lying above $S^2$. We add a   band $J\times [0,1]$ lying
below $S^2$ and meeting $S^2$ along $J\times \{0\}=J$.  The union of
these disks with the band and with half-twisted strips at the
crossings is a compact connected orientable surface in $ S^3$
bounded by  $ K_-=\kappa$.    The genus of this surface is equal to
$(\cro (K)- \vert \widehat K\vert+1 )/2$, where $\cro (K)$ is the
number of crossings of $K$ and $\vert \widehat K\vert$ is the number
of components of~$\widehat K$. Therefore
\begin{equation}\label{seif+} g(\kappa)\leq (\cro (K)- \vert \widehat K\vert +1 )/2. \end{equation}
This estimate generalizes \eqref{seif} and  can be stronger. For
example, consider   the non-alternating knot $\kappa=11n1$ from
\cite{cl} represented by a knot diagram~$D$ with 11 crossings. Here
$\vert \widehat D\vert=6$ and   \eqref{seif} gives $g(\kappa)\leq
3$.
 Removing from $D$ an  underpass with 2 crossings, we obtain a
 knotoid diagram $K$ of $\kappa$ with 9 crossings and $\vert \widehat K\vert=6$.
 Formula \eqref{seif+} gives  a stronger estimate $g(\kappa)\leq 2$. (In fact, $g(\kappa)=2$, see
 \cite{cl}.)

 \section{Basics on knotoids}\label{section1}

   \subsection{Knotoids}\label{kd1} We introduce   a notion of a knotoid
  in a  surface $\Sigma$. This notion will be central in
   the rest of the paper, specifically in the case $\Sigma=S^2$.

   The    Reidemeister  moves  $\Omega_1, \Omega_2, \Omega_3$ and   isotopy   generate an equivalence relation on the set of
   knotoid
   diagrams in   $\Sigma$:  two knotoid diagrams are equivalent if they may be obtained from  each other by a finite sequence of   isotopies and the moves
   $\Omega_i^{\pm1}$ with $i=1,2,3$. The  corresponding equivalence classes are called
   {\it knotoids} in~$\Sigma$.   The set of knotoids in $\Sigma$ is
   denoted $\mathcal K (\Sigma)$.
The   knotoid    represented by an embedding $[0,1]
   \hookrightarrow \Sigma$ is said to be {\it trivial}. Any homeomorphism of surfaces $\Sigma\to \Sigma'$
   induces a bijection $\mathcal K (\Sigma)\to \mathcal K
   (\Sigma')$ in the obvious way.

   We define two commuting involutive operations on knotoids in $\Sigma$: reversion ${\rm rev}$ and  mirror
   reflection ${\rm mir}$. Reversion   exchanges the head and the  leg of a knotoid.  In other words, reversion
    inverts
   orientation on the knotoid diagrams.   Mirror reflection transforms a knotoid  into a knotoid   represented by the same diagrams
   with overpasses changed to underpasses and vice versa.

   \subsection{Knotoids in     $S^2$}\label{kd1+++} We shall be mainly interested in knotoids in   the 2-sphere $S^2=\RR^2\cup
   \{\infty\}$. They  are defined in terms of knotoid diagrams in $S^2$ as
   above. There is a convenient class of knotoid diagrams in $S^2$
   which we now define.
  A knotoid diagram $K
\subset S^2$ is   {\it normal} if the point $\infty\in S^2$ lies in
the component of $S^2-K$ adjacent to the leg of~$K$. In other words,
$K$ is normal if $K\subset \RR^2 = S^2 -\{\infty\}$ and the leg
of~$K$ may be connected to~$\infty$ by a path avoiding the rest
of~$K$. For example, the   diagrams    in Figure~\ref{fig4} below
are normal while the diagrams   in Figures~\ref{fig1} and~\ref{fig2}
are not normal.

Any knotoid $k$ in $S^2$ can be represented by a normal diagram. To
see this, take a  diagram of $k$ in $S^2$, push it away from
$\infty$ and
  push, if necessary, several branches of the diagram across
$\infty$ to ensure that the resulting diagram is normal. Note that
the  Reidemeister moves on knotoid diagrams in $\RR^2$ (away from
the endpoints)   and ambient isotopy in $\RR^2$ preserve the class
of normal diagrams. It is easy to see that two normal knotoid
diagrams represent  the same knotoid in $S^2$ if and only if they
can be related by the Reidemeister moves in $\RR^2$  and isotopy
in~$\RR^2$.

Besides reversion and mirror reflection, we consider another
involution on~$\mathcal K (S^2)$. Observe that  the reflection of
the plane $\RR^2  $ with respect to the vertical line $\{0\}\times
\RR \subset \RR^2$  extends to a self-homeomorphism of $S^2$ by
$\infty \mapsto \infty$. Applying this  homeomorphism to knotoid
diagrams in $S^2$   we obtain an involution  on $ \mathcal K (S^2)$.
This involution is called {\it symmetry} and denoted ${\rm sym}$. It
commutes with ${\rm rev}$ and ${\rm mir}$. We call these three
involutions on $\mathcal K (S^2)$   the {\it basic involutions}.

As an exercise, the reader may check that  the knotoids in $S^2$
shown in Figure~\ref{fig1} and the knotoid $B_2$ in
Figure~\ref{fig2} are trivial. The knotoids $B_1$ in
Figure~\ref{fig2} and $\varphi$ in Figure~\ref{fig4} are   equal;
 we show in the next subsection that $\varphi$ is
non-trivial. For a list of distinct knotoids represented by  diagrams with up to 5 crossings, see \cite{Ba}.

\begin{figure}[h,t]
\includegraphics[width=12cm,height=4cm]{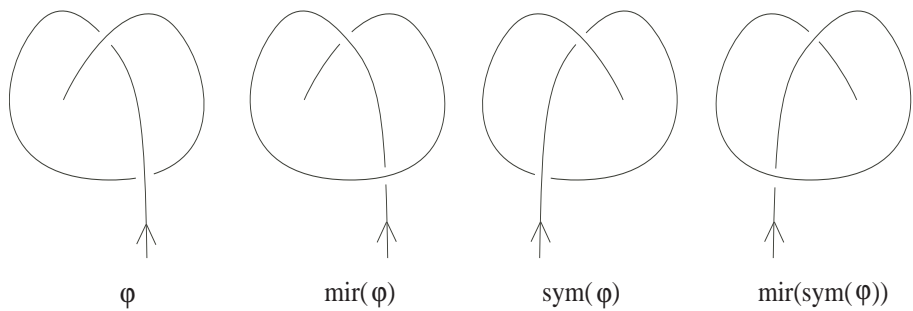}
\caption{The knotoid $\varphi$ and its transformations}\label{fig4}
\end{figure}

 \subsection{Knotoids versus knots}\label{kd2}    Every knotoid $k$ in $S^2$ determines two  knots $k_-$ and  $k_+$ in $S^3$.
  By definition $k_-=K_-$ and $k_+=K_+$, where $K\subset S^2$ is any   diagram of $k$.
  It is easy to see that   $k_{\pm}$ does  not
depend on the choice of~$K$.

In the opposite direction, every knot $\kappa \subset S^3$
determines a
 knotoid $\kappa^\bullet$ in~$S^2$. Present $\kappa$ by an oriented knot diagram $D$ in $S^2$
 and pick a small open arc $\alpha \subset D$ disjoint from the
 crossings. Then $K =D-\alpha$  is a knotoid diagram in $  S^2$ representing    $\kappa^\bullet\in \mathcal K (S^2)$.
  The diagram  $K $   may depend on the choice of~$\alpha$ but the knotoid~$\kappa^\bullet$ does not depend on this choice:
 when
 $\alpha$ is  pulled along~$D$ under (resp.\ over) a crossing   of $D$, our procedure yields an equivalent knotoid diagram.
 The
 equivalence    is achieved by pushing the    strand of~$D$
  transversal to~$\alpha$ at the crossing in question over (resp.\ under)
  $D$  towards $\infty$,
 then
 across
 $\infty$, and finally  back over (resp.\ under) $D$ from the other side of $\alpha$. (This transformation  expands as a composition of    isotopies,
  moves   $\Omega_2^{\pm 1}$, $\Omega_3^{\pm 1}$ and,
  at the very end, two moves  $\Omega_1^{- 1}$). That $\kappa^\bullet$ does not
  depend on the choice of   $D$   is clear because
  for any Reidemeister move on $D$  or a local isotopy of  $D$,
  we can  choose the arc $\alpha$ outside the disk
  where this move/isotopy modifies~$D$. To obtain a normal diagram
  of~$\kappa^\bullet$, one can apply the construction above to an
  arc $\alpha$   on an external strand of~$D$.

It is clear that  $(\kappa^\bullet)_+=(\kappa^\bullet)_-=\kappa$.
Therefore the map $\kappa\mapsto
  \kappa^\bullet$ from the set of knots   to    $\mathcal K (S^2)$ is injective.
  This  allows us to identify   knots  with the corresponding knotoids and   view $\mathcal K (S^2)$ as an extension of the set of knots.
   Accordingly, we will sometimes call the
 knotoids in $S^2$ of type $\kappa^\bullet$ {\it knots}.
  All the other knotoids in $S^2$   are said to be
   {\it pure}.  For example,   the
  knotoid $\varphi$ in $S^2$ shown in Figure~\ref{fig4} is pure because $\varphi_+ \neq
  \varphi_-$. Indeed,
  $\varphi_+$ is an unknot and  $\varphi_-$ is a  left-handed  trefoil. In particular, the knotoid $\varphi$ is non-trivial.

 The basic involutions ${\rm rev}$, ${\rm sym} $,   $ {\rm mir}$ on  $\mathcal K (S^2)$ restrict to the  orientation reversal and the reflection on knots.
Note that the restrictions of    ${\rm sym} $ and $ {\rm mir}$ to
knots are equal   because the   mirror reflections in the planes
$\RR^2 \times \{0\}$ and
  $\{0\}\times \RR^2$ are isotopic.
 The basic involutions transform pure knotoids into pure
  knotoids.

   \section{The semigroup of  knotoids}\label{section2}

\subsection{Multiplication of knotoids}\label{kd3}  Observe that each   endpoint of a knotoid diagram~$K$ in a surface~$\Sigma$ has a
  closed 2-disk
   neighborhood $B$ in $\Sigma$ such that $K$ meets $B$ precisely along a
   radius of $B$, and in particular all crossings of $K$ lie in $\Sigma - B$. We   call such     $B $ a {\it regular
   neighborhood} of the endpoint. Such neighborhoods are used in the definition of multiplication for knotoids.  Given a knotoid $k_i$ in an oriented surface $\Sigma_i$ for $i=1,2$,
 we   define a {\it product knotoid}  $k_1k_2$. Present $k_i$ by a knotoid diagram  $K_i \subset \Sigma_i$  for $i=1,2$.
Pick   regular neighborhoods $B\subset \Sigma_1$ and $B'\subset
\Sigma_2$ of the head of $K_1$ and the leg of $K_2$, respectively.
Glue $\Sigma_1- \Int (B)$ to $\Sigma_2- \Int (B')$ along a
homeomorphism $\partial B\to
\partial B'$ carrying the only point of $K_1\cap
\partial B$ to the only point of $K_2\cap \partial B'$ and such that the orientations of $\Sigma_1$,
$\Sigma_2$ extend to an orientation of the resulting surface $\Sigma
 $. The part of $K_1$ lying in
$\Sigma_1-\Int (B)$ and the part of~$K_2$ lying in $\Sigma_2-\Int
(B')$ meet   in one point and form a knotoid diagram $K_1K_2$ in~$
\Sigma $, called the {\it product} of $K_1$ and $K_2$. The knotoid
$k_1k_2 $ in~$\Sigma$ determined by $K_1K_2$ is well defined up to
orientation-preserving homeomorphisms. Clearly, if
  $\Sigma_1$, $\Sigma_2$ are connected, then
  $\Sigma=
\Sigma_1 \# \Sigma_2$.

Multiplication of knotoids is associative, and the trivial knotoid
in~$S^2$ is the neutral element. From now on, we endow    $S^2 $
with  orientation extending the counterclockwise orientation in
$\RR^2$.
 Since  $S^2\# S^2=S^2$,   multiplication of knotoids turns
 $\mathcal K(S^2)$ into a semigroup. This multiplication has a simple description in terms of normal
 diagrams: given normal diagrams $K_1$, $K_2$ of knotoids
 $k_1,k_2 \in \mathcal K(S^2)$,  we can form   $K_1K_2$   by
 attaching a copy of
 $K_2$ to the head of $K_1$ in a small neighborhood of the latter in
 $\RR^2$.
 This   implies that $$(k_1k_2)_{-}=(k_1)_{-}
 {+}  (k_2)_{-} \quad {\text {and}} \quad (k_1k_2)_{+}=(k_1)_{+}
  {+} (k_2)_{+},$$ where $ {+}$ is the
standard connected summation   of knots.
 Figure~\ref{fig5} shows  the product   of the knotoids  $\varphi,  {\rm mir}(\varphi) \in \mathcal K(S^2)$.

 \begin{figure}[h,t]
\includegraphics[width=6cm,height=5cm]{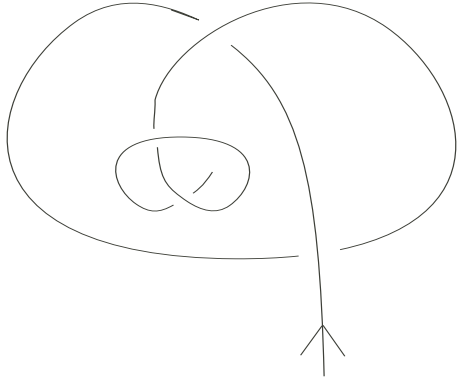}
\caption{The product $\varphi \, {\text {mir}} (\varphi)$}\label{fig5}
\end{figure}

 Given    a knotoid $k $ in $S^2$ and a knot $\kappa \subset S^3$, the
 product
  $ k  \kappa^\bullet $ is  represented by a diagram obtained  by tying  $\kappa$ in  a diagram $K $ of $k $  near the head.
 We can use the Reidemeister moves and isotopies of $\RR^2$ to pull $\kappa$ along $K $; hence, tying   $\kappa$ in any other place on
  $K $ produces the same knotoid $k  \kappa^\bullet$.
 Pulling $\kappa$ all the way through $K $ towards the leg, we obtain that \begin{equation}\label{center}
 k  \kappa^\bullet =\kappa^\bullet k . \end{equation}
 Thus,      knots lie in
 the center of the semigroup   $\mathcal K(S^2)$.

 Observe   that
  multiplication of knotoids in $S^2$ is compatible with the
  summation   of knots: $(\kappa_1  {+} \kappa_2)^\bullet
=\kappa_1^\bullet
 \, \kappa_2^\bullet
 $ for any knots $\kappa_1,
 \kappa_2\subset S^3$.

 \subsection{Prime knotoids}\label{kd4} We call a knotoid $k\in \mathcal K(S^2)$ {\it prime} if it is non-trivial, and a
 splitting $k=k_1k_2$ with $k_1,k_2\in \mathcal K(S^2)$ implies that $k_1$ or $k_2$ is the trivial
 knotoid. The next theorem says that for knots, this notion is
 equivalent to the standard notion of a prime knot.

     \begin{theor}\label{th1} A knot $\kappa \subset S^3$ is
     prime if and only if the knotoid $\kappa^\bullet\in \mathcal K(S^2)$
     is prime.
  \end{theor}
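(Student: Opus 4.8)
The plan is to prove the two implications separately. The implication \lq\lq $\kappa^\bullet$ prime $\Rightarrow$ $\kappa$ prime\rq\rq\ is elementary and I would argue it by contraposition. If $\kappa$ is trivial then $\kappa^\bullet$ is the trivial knotoid, hence not prime. If $\kappa$ is a nontrivial composite knot, write $\kappa=\kappa_1{+}\kappa_2$ with $\kappa_1,\kappa_2$ nontrivial; then the identity $(\kappa_1{+}\kappa_2)^\bullet=\kappa_1^\bullet\,\kappa_2^\bullet$ established above displays $\kappa^\bullet$ as a product of two knotoids, each nontrivial since $(\kappa_i^\bullet)_-=\kappa_i$ is a nontrivial knot while the trivial knotoid has trivial associated knot. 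Thus $\kappa^\bullet$ is not prime. This half uses only the homomorphism property of $\bullet$ and the maps $(\cdot)_-,(\cdot)_+$.

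The converse \lq\lq $\kappa$ prime $\Rightarrow$ $\kappa^\bullet$ prime\rq\rq\ is the substantial one, and it cannot be reduced to the maps $(\cdot)_\pm$ alone: these do not detect nontriviality of knotoids (for instance $\varphi_+$ is trivial although $\varphi$ is not), so a factorization $\kappa^\bullet=k_1k_2$ cannot be dismissed merely by splitting the connected sum $\kappa=(k_1)_-{+}(k_2)_-$ and invoking primality of $\kappa$ to kill one summand. Instead I would pass to theta-curves. To a knotoid $k$ with diagram $K$ I associate the spatial theta-graph $\Theta(k)=K\cup a_+\cup a_-\subset S^3$, where $S^2$ is the standard sphere in $S^3$ and $a_+$, $a_-$ are arcs joining the leg to the head above and below $S^2$; its three constituent knots are $k_+$, $k_-$, and the unknot $a_+\cup a_-$. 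The first steps are to check that $\Theta$ is well defined on $\mathcal K(S^2)$ (independence of the diagram and of the arcs, exactly as for $k_\pm$), that it sends the trivial knotoid to the trivial theta-curve, and that it is a homomorphism to the monoid of theta-curves under vertex connected sum, i.e.\ $\Theta(k_1k_2)=\Theta(k_1)\#\Theta(k_2)$.

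Granting in addition that $\Theta$ is injective, the argument closes as follows. Suppose $\kappa$ is prime but $\kappa^\bullet=k_1k_2$; applying $\Theta$ gives $\Theta(\kappa^\bullet)=\Theta(k_1)\#\Theta(k_2)$. If one knew that the standard theta-curve $\Theta(\kappa^\bullet)$, whose constituents are $\kappa$, $\kappa$, and the unknot, is a \emph{prime} theta-curve whenever $\kappa$ is a prime knot, then by unique prime factorization of theta-curves one of the factors, say $\Theta(k_2)$, would be trivial, whence $k_2$ is trivial by injectivity and the splitting is trivial. The hard part is precisely this transfer of primality: one must rule out that $\kappa^\bullet$ splits off a \emph{pure} factor $k_i$ with $(k_i)_\pm$ trivial but $\Theta(k_i)$ a nontrivial theta-curve all of whose constituents are trivial (a Kinoshita-type phenomenon invisible to $k_\pm$). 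Establishing the injectivity of $\Theta$ and identifying the prime decomposition of the standard theta-curve of $\kappa$ with Schubert's prime decomposition of $\kappa$ is where the theta-curve technique of the following sections is indispensable.
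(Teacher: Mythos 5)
Your proposal is correct and follows essentially the same route as the paper: the easy direction via the homomorphism $\kappa\mapsto\kappa^\bullet$ and $(\cdot)_-$, and the converse by transporting the problem to theta-curves through the map $k\mapsto t(k)$ (your $\Theta$), whose two key inputs---injectivity (part of the paper's Theorem \ref{th777}, that $t:\mathcal K(S^2)\to\Theta^s$ is a semigroup isomorphism) and the primality transfer $\kappa$ prime $\Leftrightarrow$ ${\tau}(\kappa)$ prime (the paper's Lemma \ref{le1})---you correctly identify as the substantive content deferred to the theta-curve machinery. Your only deviation is cosmetic: you invoke Motohashi's unique factorization where the definition of a prime theta-curve already suffices once ${\tau}(\kappa)$ is known to be prime.
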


  One direction   is obvious: if $\kappa$ is as a
sum
  of non-trivial knots, then $\kappa^*$ is a product
  of non-trivial knotoids. The converse as well as the next theorem will be  proved in Section \ref{section4} using the results of Section \ref{section3}.

   \begin{theor}\label{th2} Every  knotoid in $ S^2 $ expands as a product of prime knotoids. This expansion is
   unique up to the identity  \eqref{center}, where  $k$ runs over prime knotoids and $\kappa$ runs over prime
   knots.
  \end{theor}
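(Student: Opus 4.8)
The plan is to encode each knotoid in $S^2$ as a spatial graph in $S^3$ -- a theta-curve -- and to reduce the statement to a unique prime decomposition theorem for theta-curves. Realize $S^2$ as the equatorial sphere in $S^3$ and fix a north pole $n$ and a south pole $s$. Given a knotoid diagram $K\subset S^2$ with leg $\ell$ and head $h$, let $\Theta(K)$ be the spatial graph with the two vertices $\ell,h$ and three edges: the edge $e_0$ carried by $K$ itself (with its crossings resolved in $S^3$), the edge $e_+$ joining $\ell$ to $h$ through $n$ above $S^2$, and the edge $e_-$ joining them through $s$ below $S^2$. Deleting $e_+$ recovers the knot $K_-$ and deleting $e_-$ recovers $K_+$, while the constituent knot $e_+\cup e_-$ is trivial. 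The moves $\Omega_1,\Omega_2,\Omega_3$ and isotopies of $S^2$ (including those displacing the endpoints) extend to ambient isotopies of $\Theta(K)$ in $S^3$, so $\Theta$ descends to a map from $\mathcal K(S^2)$ to the set of theta-curves equipped with a labeling of their three edges $(e_0,e_+,e_-)$ for which $e_+\cup e_-$ is unknotted. I would take as the first main technical input (the content of Section~\ref{section3}) that $\Theta$ is injective, i.e.\ that the knotoid can be reconstructed from this labeled theta-curve.

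Next I would match the algebra. The product $k_1k_2$ is formed by identifying the head of $K_1$ with the leg of $K_2$; under $\Theta$ this corresponds to the vertex-connected sum of theta-curves along a sphere meeting the graph in three points, carried out compatibly with the labels $(e_0,e_+,e_-)$. Thus $\Theta$ is a homomorphism from $\mathcal K(S^2)$ to the monoid of labeled theta-curves under connected sum, and by injectivity it identifies $\mathcal K(S^2)$ with a submonoid. Because the connected sum of theta-curves admits a well-defined additive complexity vanishing only on the trivial theta-curve (the standard finiteness of sphere decompositions for knotted graphs, of Haken--Kneser--Milnor type), every theta-curve, hence every knotoid, has only finitely many nontrivial factors; splitting repeatedly yields existence of a factorization into primes. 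Here one must check that $\Theta$ carries prime knotoids to prime theta-curves and conversely: for knot-type knotoids $\kappa^\bullet$ this is Theorem~\ref{th1}, and for pure knotoids it follows from the correspondence between the decomposing spheres on the two sides.

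For uniqueness I would invoke the unique prime decomposition of theta-curves and translate it back. The prime summands of a theta-curve $\Theta(k)$ are of two kinds: local knots, i.e.\ prime knots tied into a single edge, which can be slid freely along that edge and past the vertices, and genuinely vertex-knotted prime theta-curves, whose order along the graph is rigid. Under $\Theta$ the first kind corresponds exactly to the central factors $\kappa^\bullet$ with $\kappa$ a prime knot, and the sliding of a local knot along $e_0$ from the head side to the leg side is precisely the relation \eqref{center}; the second kind corresponds to the pure prime knotoids, whose order along the underlying segment is preserved. Hence the uniqueness of the theta-curve decomposition, read through $\Theta$, is exactly uniqueness of the knotoid factorization up to \eqref{center}.

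The main obstacle, as I see it, is twofold and lives entirely in Section~\ref{section3}. First, the injectivity of $\Theta$: one must show that a knotoid is recoverable from its labeled theta-curve, i.e.\ that distinct knotoids cannot become isotopic once the two closing arcs are adjoined -- a genuinely $3$-dimensional statement, likely requiring that any isotopy of $\Theta(K)$ be normalized to respect the equatorial sphere up to the allowed moves. Second, the precise bookkeeping that turns the unique sphere-decomposition of theta-curves into the centrality-only ambiguity \eqref{center}: one must verify that local-knot summands are exactly the slidable central ones and that the residual order of the pure summands is a complete invariant, with no further coincidences. The underlying uniqueness of the theta-curve sphere decomposition itself I would treat as standard $3$-manifold topology (normal surfaces and innermost-disk arguments), so it is the translation, rather than the decomposition theorem, that I expect to be delicate.
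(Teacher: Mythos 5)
Your proposal follows the paper's own route: the paper constructs exactly your encoding as a map $t:\mathcal K(S^2)\to \Theta^s$ onto the semigroup of \emph{simple} theta-curves and proves it is a semigroup \emph{isomorphism} (Theorem~\ref{th777}, with injectivity resting on Lemma~\ref{le3} via the normalization-of-isotopies argument you anticipate), then deduces Theorem~\ref{th2} from Motohashi's prime decomposition theorem specialized to simple theta-curves, where the allowed commutations of $\tau(\kappa)$-factors translate precisely into \eqref{center}. The one refinement to note is that the paper's surjectivity onto $\Theta^s$ is genuinely needed, not just injectivity: it is what makes your ``decomposing spheres on both sides'' step rigorous, since a factor of $\Theta(k)$ is automatically a simple theta-curve and hence realized by a knotoid, so that primality transfers in both directions.
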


These theorems have interesting corollaries. First of all, the
product of two non-trivial knotoids cannot be a trivial knotoid.
Secondly,   the product of two knotoids cannot be a knot, unless
both knotoids are knots. Thirdly, every knotoid expands uniquely as
a product $\kappa^\bullet k_1k_2\cdots k_n$, where $\kappa$ is a
knot in $S^3$ (possibly, trivial), $n\geq 0$, and $k_1,k_2, \ldots,
k_n$ are pure prime knotoids in $S^2$. In more algebraic terms, we
obtain that   $\mathcal K(S^2)$ is the direct product of the
semigroup of knots and the subsemigroup of $\mathcal K(S^2)$
generated by pure prime knotoids. This subsemigroup is free on these
generators. The semigroup of knots is precisely the center of
$\mathcal K(S^2)$.


 \subsection{Complexity}\label{kd5}
    The {\it complexity} $c(K)$ of a knotoid
 diagram $K\subset S^2$ is the minimal integer~$c$ such that there
 is a shortcut $a\subset S^2$ for  $K$  whose interior meets~$K$   in $c$
  points    (the endpoints of $a$ are not counted). The {\it complexity} $c(k) $ of
  a knotoid $k\in \mathcal K(S^2)$ is the minimum of the
  complexities of the diagrams of $k$. It is clear that  $c(k)\geq 0$ and    $c(k)=0$ if and only if $k$ is a knot.
  A knotoid $k$ is  pure if and only if $c(k)\geq 1$. The complexity of a knotoid is preserved under the basic involutions.
  For example, the knotoid $\varphi$ in Figure~\ref{fig4} satisfies 
  $$c(\varphi)=c({\rm mir } (\varphi))= c({\refo } (\varphi)) =c({\rm rev }
  (\varphi))=1.$$

  Since the complexity of
  a knotoid diagram   is invariant under isotopies   in
  $S^2$, to compute the complexity of   a knotoid     we may safely restrict ourselves to
  normal diagrams and the    shortcuts in $ \RR^2$. It is easy to deduce   that
  $c(k_1 k_2)\leq c(k_1)+c(k_2)$ for any   $k_1, k_2 \in \mathcal K(S^2)$. The
  following theorem, proved in  Section \ref{section4}, shows that this inequality is in fact an
  equality.

 \begin{theor}\label{th3}  We have $c(k_1 k_2)= c(k_1)+c(k_2)$ for any  $k_1, k_2 \in \mathcal
 K(S^2)$.
  \end{theor}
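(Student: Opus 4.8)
The plan is to establish the nontrivial inequality $c(k_1k_2)\ge c(k_1)+c(k_2)$, which together with the inequality $c(k_1k_2)\le c(k_1)+c(k_2)$ noted above gives the theorem. The guiding idea is that a product knotoid carries a \emph{separating circle}: a simple closed curve $\gamma\subset S^2$ meeting the diagram in the single point where the head of the first factor is glued to the leg of the second, with the two disks bounded by $\gamma$ carrying diagrams of $k_1$ and $k_2$. Granting such a $\gamma$, the complexity splits, and the slack works in our favour.

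Concretely, suppose $D$ is a diagram of $k_1k_2$ realizing $c(k_1k_2)$, presented in product form with separating circle $\gamma$; write $q$ for the unique point of $\gamma\cap D$, let $\Delta_1,\Delta_2$ be the two disks bounded by $\gamma$ with $D_i=D\cap\overline{\Delta_i}$ a diagram of $k_i$, and say the leg of $D$ lies in $\Delta_1$ and its head in $\Delta_2$. Choose a shortcut $a$ with $|a\cap D|=c(k_1k_2)$, transverse to $\gamma$. Let $\alpha$ be the component of $a\setminus\gamma$ containing the leg and $\beta$ the component containing the head; then $\alpha\subset\overline{\Delta_1}$ runs from the leg to a point of $\gamma$ and $\beta\subset\overline{\Delta_2}$ runs from the head to a point of $\gamma$. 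Closing $\alpha$ up to $q$ along a subarc of $\gamma$ (pushed slightly into $\Delta_1$) produces a shortcut $a_1$ for $D_1$, and similarly a shortcut $a_2$ for $D_2$. Since $\gamma$ meets $D$ only at $q$, which is an endpoint of $a_1$ and of $a_2$ and hence uncounted, we have $|a_1\cap D_1|\le|\alpha\cap D|$ and $|a_2\cap D_2|\le|\beta\cap D|$. As $\alpha\cap D$ and $\beta\cap D$ are disjoint subsets of $a\cap D$, it follows that
\[
c(k_1)+c(k_2)\le |a_1\cap D_1|+|a_2\cap D_2|\le |a\cap D|=c(k_1k_2),
\]
as desired.

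The hard part is the hypothesis used above: that a complexity-minimizing diagram of $k_1k_2$ may be taken in product form, i.e.\ that the separating circle $\gamma$ actually exists. This is where the theta-curve technique of Section~\ref{section3} enters. I would pass from $k$ to its spatial theta-curve $\Theta(k)$, under which the product of knotoids corresponds to a connected sum of theta-curves along a sphere $S$ meeting the curve in a single point of its knotted edge. The existence of a suitable $S$, and hence of the planar circle $\gamma$ to which it descends, I expect to extract from an incompressibility/innermost-sphere argument together with the unique factorization of Theorem~\ref{th2}. The genuinely delicate point---and the one I anticipate as the main obstacle---is to realize this decomposing sphere \emph{simultaneously} with a complexity-realizing shortcut, so that cutting along $\gamma$ neither forces extra intersections with $D$ nor destroys the minimality of $a$; a naive spanning-disk reformulation of $c$ fails here, since a spanning disk of the trivial constituent of $\Theta(k)$ can be pushed off one summand, so the argument must be run at the level of the separating sphere rather than of an auxiliary disk.
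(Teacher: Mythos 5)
Your proposal correctly reduces the theorem to one hypothesis, and the shortcut-splitting computation you perform under that hypothesis is sound --- indeed it is the two-dimensional shadow of the endgame of the paper's Lemma \ref{le2}, where the arc $\Sigma\cap D$ cuts a minimal spanning disk into two half-disks, one per factor. But the hypothesis itself --- that a complexity-minimizing diagram of $k_1k_2$ can be realized in product form, with a separating circle $\gamma$ meeting the diagram once and a complexity-realizing shortcut compatible with it --- is essentially the whole content of the theorem, and you leave it unproved: you say you ``expect to extract'' the decomposing sphere and explicitly flag its compatibility with a minimal shortcut as ``the main obstacle.'' Theorem \ref{th2} cannot fill this gap: unique factorization is an algebraic statement about the semigroup $\mathcal K(S^2)$ and gives no control whatsoever over which diagrams or shortcuts realize $c(k_1k_2)$. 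As written, the inequality $c(k_1k_2)\ge c(k_1)+c(k_2)$ is therefore not established.

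The paper's proof shows how to sidestep exactly this obstacle, and your final remark points away from, not toward, the fix. The paper never normalizes a minimal diagram; instead it proves $c(k)=c(t(k))$ (Section \ref{section3.4}), where $c(\theta)$ of a simple theta-curve is the minimal number of intersections of the interior of a spanning disk with the $0$-labeled edge. This translation requires no preservation of minimality: \emph{any} shortcut $a$ for \emph{any} diagram yields a spanning disk $D_a$ with the same intersection count, and conversely any spanning disk can be isotoped to standard position so that $D\cap\RR^2$ is a shortcut. Additivity is then proved in $S^3$ (Lemma \ref{le2}), where the decomposing sphere exists for free by the definition of the vertex product $t(k_1k_2)=t(k_1)\,t(k_2)$ (via the isomorphism of Theorem \ref{th777}); one fixes a spanning disk realizing $c(\theta)$ and isotopes the \emph{sphere}, by an innermost-circle argument together with a mod-$3$ intersection count (ruling out $n_i=1$, whence $n_0+n_1=3$ forces one hemisphere off the theta-curve), until sphere and disk meet in a single arc. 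Your objection that ``a spanning disk of the trivial constituent can be pushed off one summand'' is mistaken: the disk's boundary is $\theta_0=e_-\cup e_+$, which crosses the decomposing sphere in two points, so the disk can never be pushed off either side --- this is precisely what guarantees it splits into two half-disks giving $m_0+m_1=c(\theta)$ with $m_i\ge c(\theta_i)$. In short, the separating sphere should be sought in $S^3$, where it exists by construction, rather than in $S^2$, where producing it is the entire difficulty; and the spanning-disk reformulation you reject is exactly the device that transports the problem to where the sphere is available.
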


  Theorem \ref{th3}  implies that   $k\mapsto c(k)$ is
  a homomorphism from the semigroup $\mathcal K(S^2)$ onto the additive semigroup of non-negative integers $\ZZ_{\geq 0}$.

 \section{A digression on theta-curves }\label{section3}

 \subsection{Theta-curves}\label{tc1} A {\it theta-curve} $\theta$
 is
 a graph embedded in $S^3$ and formed by  two  vertices $v_0, v_1$ and
  three edges $e_-, e_0, e_+$ each of which joins $v_0$ to $v_1$. We call $v_0$ and $v_1$ the {\it leg} and the {\it head} of $\theta$ respectively.
  Each   vertex $v\in \{v_0, v_1\}$ of $\theta$   has a
  closed 3-disk
   neighborhood $B \subset S^3$ meeting $\theta$ along precisely 3 radii of $B$.  We   call such     $B $ a {\it regular
   neighborhood} of $v$.
   The sets
 $$\theta_-=e_0\cup e_-,\quad \theta_0=e_-\cup e_+, \quad \theta_+=e_0\cup
 e_+$$ are    knots in $S^3$ which we   orient  from $v_0$ to $v_1$ on $e_0\subset \theta_-$, $e_-\subset \theta_0$, and $e_+ \subset \theta_+$.
 These   knots are called the {\it constituent knots} of   $\theta$.

By isotopy of theta-curves, we mean ambient isotopy in $S^3$
preserving the   labels $0$, $1$ of the vertices and the labels $-$,
$0$, $ +$ of the edges. The set of isotopy classes of theta-curves
will be denoted   $ \Theta$.

All theta-curves lying in $S^2\subset  S^3$ are isotopic to each
other. They are called {\it trivial} theta-curves. The isotopy class
of trivial theta-curves is denoted by $1$.

   Given a knot
$\kappa\subset S^3$, we can tie it   in the $0$-labeled edge of a
trivial theta-curve. This yields a theta-curve   ${\tau}(\kappa)$.
It is obvious that $({\tau}(\kappa))_0$ is a trivial knot and
\begin{equation}\label{ado} ({\tau}(\kappa))_-=({\tau}(\kappa))_+=\kappa . \end{equation}
 This implies   that   ${\tau}(\kappa) =1$   if and only if $\kappa$ is a trivial knot. Similarly, tying
$\kappa $   in the $\pm$-labeled edge of a trivial theta-curve, we
obtain  a  theta-curve  ${\tau}^{\pm} (\kappa)$.

\subsection{Vertex multiplication}\label{tc1+} The set $\Theta$ has a binary operation   called the {\it vertex
multiplication}, see \cite{wo}. It is defined as follows.   Given
theta-curves $\theta, \theta'$,
    pick   regular neighborhoods $B $ and $B' $ of the head of $\theta $ and of the leg of $\theta'$, respectively.
Let us  glue   the closed 3-balls $S^3- \Int (B)$ and    $S^3- \Int (B')$  along an orientation-reversing
homeomorphism $\partial B\to
\partial B'$ carrying the only point of $
\partial B$ lying on the $i$-th edge of $\theta $ to the only point of $  \partial B'$ lying on the $i$-th edge of $\theta'$ for $i=-, 0, +$.
(The orientation in $\partial B$, $
\partial B'$ is induced by the right-handed orientation in $S^3 $ restricted to $B, B'$.)
 The part of $\theta $ lying in
$S^3-\Int (B)$ and the part of $\theta'$ lying in $S^3-\Int (B')$
meet   in 3 points and form a  theta-curve in $  S^3$ denoted
$\theta \cdot \theta'$ or  $\theta \theta'$. This theta-curve  is
well defined up to isotopy. Observe that
$$ (\theta \theta')_-=\theta_-  {+} \theta'_-, \quad
 (\theta \theta')_0=\theta_0 {+} \theta'_0, \quad
(\theta \theta')_+=\theta_+ {+} \theta'_+.$$

It is obvious that   vertex multiplication is associative. It turns
$\Theta$ into a semigroup with neutral element $1$  represented by
the trivial  theta-curves. Note one important property of $\Theta$:
the vertex product of two theta-curves is trivial if and only if
both factors are trivial (see \cite{wo}, Theorem 4.2 or \cite{mo1},
Lemma 2.1).

It follows from the definitions that the map $\kappa\mapsto
{\tau}(\kappa)$ from the semigroup of knots to $\Theta$ is a
semigroup homomorphism: for any knots $\kappa_1, \kappa_2 \subset
S^3$,
\begin{equation}\label{homo}
{\tau}(\kappa_1+\kappa_2)={\tau}(\kappa_1) \cdot  {\tau}(\kappa_2).
\end{equation}
The image of this  homomorphism lies in the center of $\Theta$:
   pulling  a knot~$\kappa$ along the 0-labeled edge, we
easily obtain that ${\tau} (\kappa)  \cdot  \theta =\theta  \cdot
{\tau}(\kappa)$ for any theta-curve~$\theta$. Similarly,  the   maps
$\kappa\mapsto {\tau}^+ (\kappa)$ and $\kappa\mapsto {\tau}^-
(\kappa)$ are  homomorphisms from the semigroup of knots to the
center of $\Theta$.

\subsection{Prime theta-curves}\label{tc3} A theta-curve   is {\it prime} if it is non-trivial and does not split as a vertex product
of two non-trivial theta-curves. This definition is parallel to the
one of a  prime knot: a knot in $S^3$ is prime if it is non-trivial
and does not split as a connected  sum of two non-trivial knots. The
following lemma relates these two definitions.

\begin{lemma}\label{le1} A knot $\kappa\subset S^3$ is prime if and only if the theta-curve ${\tau}(\kappa)$ is prime.
\end{lemma}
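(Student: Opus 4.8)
The plan is to prove the two implications separately, dispatching the direction ``$\tau(\kappa)$ prime $\Rightarrow\kappa$ prime'' quickly with the homomorphism property \eqref{homo}, and reserving the real work for the converse. For the easy direction I would argue by contraposition. If $\kappa$ is trivial then $\tau(\kappa)=1$ is not prime by \eqref{ado}. If $\kappa=\kappa_1+\kappa_2$ with $\kappa_1,\kappa_2$ nontrivial, then \eqref{homo} gives $\tau(\kappa)=\tau(\kappa_1)\cdot\tau(\kappa_2)$; since $\tau(\kappa_i)=1$ would force $\kappa_i$ trivial (again \eqref{ado}), both factors are nontrivial and $\tau(\kappa)$ is not prime. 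Hence $\tau(\kappa)$ prime forces $\kappa$ prime.

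The substance is the converse. Assume $\kappa$ is prime and suppose, for contradiction, that $\tau(\kappa)=\theta\theta'$ with $\theta,\theta'$ both nontrivial. Such a vertex splitting is realized by a $2$-sphere $S\subset S^3$ meeting each of the edges $e_-,e_0,e_+$ of $\tau(\kappa)$ transversally in a single point and separating the leg from the head, with $\theta$ the vertex-tangle on one side and $\theta'$ on the other. First I would read off the constituent knots: from the additivity $(\theta\theta')_i=\theta_i+\theta'_i$ together with $(\tau(\kappa))_0=1$ and $(\tau(\kappa))_-=(\tau(\kappa))_+=\kappa$ (see \eqref{ado}), I get $\theta_0=\theta'_0=1$, and, since $\kappa$ is prime, in each of the relations $\theta_-+\theta'_-=\kappa$ and $\theta_++\theta'_+=\kappa$ one summand is trivial and the other equals $\kappa$. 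Geometrically this is forced: $S$ meets the constituent knot $(\tau(\kappa))_-=e_0\cup e_-$ in exactly two points, so it is a connected-sum sphere for $\kappa$, and primality says that one of the two balls bounded by $S$ meets this knot in a trivially knotted arc; the same applies to $(\tau(\kappa))_+=e_0\cup e_+$.

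The hard part will be upgrading ``the three constituent knots of a factor are trivial'' to ``the factor itself is trivial''. A theta-curve is not determined by its constituent knots: there exist nontrivial (Brunnian) theta-curves, such as the Kinoshita curve, all of whose constituents are unknots, so the constituent-knot bookkeeping above does not by itself yield a contradiction. To close this gap I would exploit the special, ``thin'' structure of $\tau(\kappa)$. Fix a ball $U$ with $\tau(\kappa)\cap U$ a single knotted arc of $e_0$ carrying $\kappa$, and with $\tau(\kappa)$ trivial outside $U$. Put $S$ in general position with $\partial U$ and minimize the number of circles of $S\cap\partial U$ by innermost-disk surgeries in the irreducible manifold $S^3$, keeping the three marked intersection points fixed. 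The point $S\cap e_0$ then either lies outside $U$, in which case $S$ can be isotoped off $U$ entirely and one side of $S$ carries only the trivial theta-tangle; or it lies inside $U$, in which case a component of $S\cap U$ is a decomposing disk for the knotted arc and hence a connected-sum sphere for $\kappa$. Because $\kappa$ is prime, that connected-sum sphere is boundary-parallel and can be pushed across $U$. Either way $S$ is isotoped off the knotting, so that $\theta$ or $\theta'$ reduces to the trivial vertex-tangle, contradicting the assumption that both are nontrivial; this proves $\tau(\kappa)$ prime.

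I expect this last isotopy/minimization step to be the main obstacle: ensuring that the decomposing sphere can be made disjoint from the knotting ball is exactly what rules out Brunnian factors, and it is the only place where the primality of $\kappa$ is genuinely used. If the paper instead has a clean unique-factorization theorem for $\Theta$ at its disposal, one could shortcut part of this by showing directly that $\tau(\kappa)$ admits no Brunnian prime factor, but the geometric core — a decomposing sphere of $\tau(\kappa)$ is essentially a connected-sum sphere of $\kappa$ — would remain the same.
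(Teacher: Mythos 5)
Your overall strategy is the paper's: the easy direction via \eqref{homo} and \eqref{ado}, and for the converse a decomposing sphere $S$ that one tries to isotope off the ball $U$ carrying the local knot. But there are two genuine gaps. The first is the endgame. Once $S\cap U=\emptyset$ you assert that ``one side of $S$ carries only the trivial theta-tangle,'' so that $\theta$ or $\theta'$ ``reduces to the trivial vertex-tangle.'' That does not follow from disjointness alone: what you actually obtain is that the \emph{trivial} theta-curve $\eta$ (namely $\tau(\kappa)$ with the knotting in $U$ untied, which does not affect the factor on the side of $S$ away from $U$) splits as a vertex product $\theta'_1\theta'_2$ with one factor equal to $\theta$ or $\theta'$. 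To conclude $\theta'_1=\theta'_2=1$ you need the nontrivial theorem of Wolcott (\cite{wo}, Theorem 4.2; see also \cite{mo1}, Lemma 2.1) that a vertex product of theta-curves is trivial only if both factors are trivial --- the theta-curve analogue of ``a connected sum of knots is unknotted iff both summands are,'' which is a theorem, not a formality. This is exactly where the paper invokes Wolcott; your closing remark gestures at unique factorization in $\Theta$, but the needed input is this indivisibility of the trivial theta-curve, and without it your contradiction does not close.

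The second gap is the isotopy itself, which you correctly identify as the main obstacle but whose sketch contains two inaccuracies. ``Innermost-disk surgeries'' are not available: surgering $S$ destroys the decomposing-sphere structure (it must remain a single sphere meeting each edge exactly once), so one must \emph{isotope} $S$, and to do that one needs to know that an innermost circle of $S\cap\partial U$ cuts off a hemisphere of $S$ \emph{disjoint} from $\tau(\kappa)$. The paper secures this by observing that the theta-curve with all edges oriented leg-to-head is a $1$-cycle mod $3$, so a sphere cannot meet it in exactly one point; this mod-$3$ argument (borrowed from the proof of Lemma \ref{le2}) is absent from your sketch. Moreover, even after all circles of $S\cap\partial U$ separate the poles, your claim that when $S\cap e_0$ lies inside $U$ ``a component of $S\cap U$ is a decomposing disk for the knotted arc'' is false in general: $S\cap U$ may consist entirely of annuli with no disk component, and this is precisely the hard case in the paper, handled by a linking-number analysis of the disk components of $S\cap(S^3-\operatorname{Int}(U))$ and by using the unknottedness of the relevant subarc of the trivial curve $\eta$ to push such a disk into $U$, only then creating a disk component inside $U$ that primality of $\kappa$ eliminates. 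So your use of primality is the right one, but the reduction that delivers a disk meeting $\kappa$ once requires the additional machinery you have not supplied.
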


We postpone the proof of Lemma \ref{le1} to the end of the section.
This lemma  implies two similar claims: a knot $\kappa $ is prime if
and only if  the theta-curve~${\tau}^+(\kappa)$ is prime;  a knot
$\kappa $ is prime if and only if  the theta-curve
${\tau}^-(\kappa)$ is prime.

\subsection{Prime decompositions}\label{tc4} Tomoe Motohashi \cite{mo1}
 established the following decomposition theorem for theta-curves: every theta-curve  $\theta$ expands as a (finite) vertex product
of prime theta-curves; these prime theta-curves  are   determined
by~$\theta$ uniquely up to permutation.  A more precise version of
the uniqueness   is given in \cite{mo2}, Theorems 1.2 and 1.3 (see also \cite{mmm}) : the
expansion $\theta=\theta_1 \theta_2 \cdots \theta_m$   as a product
of prime theta-curves is unique up to the transformation replacing
$\theta_i\theta_{i+1}$ with $ \theta_{i+1} \theta_i $ (where $i=1,
\ldots, m-1$) allowed whenever $\theta_i$ or $ \theta_{i+1} $ is the
theta-curve ${\tau}(\kappa)$, ${\tau}^{+}(\kappa)$ or
${\tau}^{-}(\kappa)$ for some knot $\kappa\subset S^3$.

\subsection{Simple theta-curves}\label{tc5} We call a theta-curve $\theta$ {\it simple} if the associated constituent  knot $\theta_0 $ is trivial. For example, the trivial theta-curve is simple. For
a non-trivial knot $\kappa\subset S^3$, the theta-curve
${\tau}(\kappa) $ is simple while ${\tau}^+(\kappa) $ and
${\tau}^-(\kappa) $ are not simple. A theta-curve isotopic to a
simple theta-curve is itself simple.

 The identity $
(\theta \theta')_0=\theta_0 {+} \theta'_0$ implies that  the vertex
product of two  theta-curves  is simple   if and only if both
factors are simple. The isotopy classes of simple theta-curves form
a sub-semigroup of   $\Theta$ denoted~$\Theta^s$. Clearly,
$\Theta^s$ is the kernel of the   homomorphism $\theta\mapsto
\theta_0$ from  $\Theta$ to the semigroup of knots in $S^3$.

The Motohashi prime decomposition theorem   specializes to simple
theta-curves as follows:
 every simple theta-curve expands as a  product $\theta_1 \theta_2 \cdots \theta_m$
of prime simple theta-curves; this expansion is unique  up to the
transformation replacing  $\theta_i\theta_{i+1}$ with $ \theta_{i+1}
\theta_i $ (where $i=1, \ldots, m-1$) allowed whenever
$\theta_i={\tau}(\kappa)$ or $ \theta_{i+1}={\tau}(\kappa) $
 for some knot
$\kappa\subset S^3$.

 \subsection{Complexity  of simple theta-curves}\label{tc5new}  We introduce   a numerical \lq\lq complexity"  of  a simple theta-curve $\theta$.
  By assumption,   there is an embedded 2-disk $D\subset S^3$ such that $\partial D=\theta_0 $ is the union of the edges
  of $\theta$ labeled by $-$ and $+$. Deforming slightly   $D$ in $S^3$ (keeping $\partial D$), we can assume that the disk interior  $\Int (D)$
   meets the $0$-labeled edge  of $\theta$   transversely at a finite number of points.
 We call such $D $
  a {\it spanning disk} for~$\theta$.
 The minimal number of the  intersections of the interior of a spanning disk for~$\theta$  with the $0$-labeled edge  of $\theta$  is called
   the {\it complexity} of $\theta $ and denoted  $c(\theta)$.  It is clear that $c(\theta) \geq 0$ is an isotopy invariant  of $\theta$ and
    $c(\theta)=0$ if and only if $\theta={\tau}(\kappa)$ for a  knot  $\kappa\subset
    S^3$.

   \begin{lemma}\label{le2} For any simple  theta-curves $\theta_1$, $\theta_2$,
   \begin{equation}\label{addit} c(\theta_1\theta_2)=c(\theta_1)+c(\theta_2). \end{equation}
\end{lemma}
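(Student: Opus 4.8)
The plan is to prove the two inequalities $c(\theta_1\theta_2)\le c(\theta_1)+c(\theta_2)$ and $c(\theta_1\theta_2)\ge c(\theta_1)+c(\theta_2)$ separately. Throughout, let $S=\partial B=\partial B'$ be the $2$-sphere along which the two copies of $S^3$ are glued in the definition of $\theta_1\theta_2$. It meets $\theta=\theta_1\theta_2$ transversally in three points $p_-,p_0,p_+$ (one on each edge) and separates $S^3$ into two balls, one containing the part of $\theta_1$ together with the three radii issuing from its head, the other containing the corresponding part of $\theta_2$. The leg $v_0$ and head $v_1$ of $\theta$ lie in the two distinct balls.

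For the upper bound I would start from spanning disks $D_1,D_2$ realizing $c(\theta_1),c(\theta_2)$. Near the head vertex of $\theta_1$ the disk $D_1$ may be taken to be the cone on the two radii lying on $\theta_0$, so that $D_1$ meets $S$ in a single arc joining $p_-$ to $p_+$ and avoiding $p_0$, and all its intersections with the $0$-edge lie outside the gluing ball; similarly for $D_2$ near the leg of $\theta_2$. Gluing $D_1$ and $D_2$ along matching arcs produces a spanning disk for $\theta$ whose interior meets the $0$-edge in exactly $c(\theta_1)+c(\theta_2)$ points, giving the inequality.

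The substance is the lower bound. I would choose a spanning disk $D$ for $\theta$ with $|\Int(D)\cap e_0|=c(\theta)$, make it transverse to $S$, and among all such disks pick one minimizing the number of components of $D\cap S$. Since $\partial D=\theta_0=e_-\cup e_+$ meets $S$ only at $p_-,p_+$, the $1$-manifold $D\cap S$ consists of a single arc $\alpha$ from $p_-$ to $p_+$ together with some circles, and after a small perturbation $\alpha$ avoids $p_0$. The goal is to show that there are no circles; once this is known, cutting $D$ along $\alpha$ and coning the two pieces to the capped-off vertices yields spanning disks for $\theta_1,\theta_2$ whose $0$-edge intersection numbers sum to $c(\theta)$, so that $c(\theta)\ge c(\theta_1)+c(\theta_2)$, and the two inequalities together give \eqref{addit}.

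To eliminate circles I would run an innermost-disk surgery. Let $C$ be a circle of $D\cap S$ innermost on $S$, bounding a disk $E\subset S$ with $\Int(E)\cap D=\emptyset$, and let $\Delta\subset D$ be the subdisk bounded by $C$, so that $\Sigma'=E\cup\Delta$ is an embedded sphere. The key observation, and the step I expect to be the main obstacle, is a parity argument: since $E$ meets $\theta$ at most at $p_0$ and the interior of $D$ is disjoint from $e_\pm$, the edges $e_-$ and $e_+$ are entirely disjoint from $\Sigma'$, which forces $v_0$ and $v_1$ to lie on the \emph{same} side of $\Sigma'$; hence $e_0$ meets $\Sigma'$ in an even number of points. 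Replacing $\Delta$ by a copy of $E$ pushed slightly off $S$ gives again a spanning disk (its interior still misses $e_\pm$, as $E$ does) and changes the count by $|E\cap e_0|-|\Delta\cap e_0|$. If $p_0\notin E$ then $E$ misses $\theta$, so this change is $-|\Delta\cap e_0|\le 0$ while $|D\cap S|$ strictly drops, contradicting minimality. If $p_0\in E$ the parity statement forces $|\Delta\cap e_0|$ to be odd, hence at least $1$, and the change is $1-|\Delta\cap e_0|\le 0$, again either lowering $c(\theta)$ (impossible) or leaving the count fixed while decreasing $|D\cap S|$ (contradicting the secondary minimization). In either case a circle cannot occur, so $D\cap S=\alpha$ and the splitting above applies.
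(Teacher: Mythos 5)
Your proof is correct, and its skeleton is the paper's: the upper bound by gluing spanning disks, and the lower bound by putting a minimal spanning disk transverse to the decomposition sphere, killing the circle components of the intersection by an innermost argument, and splitting along the surviving arc. But you implement the key step in the dual direction, and with different homological input. The paper fixes a spanning disk $D$ realizing $c(\theta)$ and \emph{isotopes the sphere} $\Sigma$: it takes a circle innermost on $D$, bounding $D_s\subset D$, observes that $\theta$ with all edges oriented leg-to-head is a $1$-cycle modulo $3$, deduces that one hemisphere of $\Sigma$ is disjoint from $\theta$, and pushes that hemisphere across the ball bounded by $D_s\cup\Sigma_0$; it must then note that the deformed sphere still exhibits the vertex product. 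You instead fix the sphere and \emph{surger the disk}, with a lexicographic minimization (first the $e_0$-count, then the number of components of $D\cap S$), take the circle innermost on $S$ rather than on $D$, and use a mod $2$ separation argument: since $e_-$ and $e_+$ miss $E\cup\Delta$, the two vertices lie on one side of it, so $e_0$ crosses it evenly, which is exactly what bounds the change $|E\cap e_0|-|\Delta\cap e_0|$ under the swap. Each version buys something: the paper's never touches the minimal disk, so the count $c(\theta)$ needs no re-examination; yours never moves the sphere, so the product structure needs no re-verification, and your parity bookkeeping explicitly covers the possibility that the subdisk of $D$ meets the $0$-labeled edge --- a case the paper's count (asserting that $D_s\cup\Sigma_i$ meets $\theta$ in exactly $n_i$ points, i.e.\ implicitly that $D_s\cap e_0=\emptyset$) passes over rather quickly, so your treatment is arguably the more careful one on this point. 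Your closing step, coning the two half-disks to the capped-off vertices, is the paper's contraction of $B_{1-i}$ in other words; the only detail worth recording is the one you arranged, namely that $\alpha$ avoids $p_0$, so the cone misses the $0$-labeled radius and contributes no new intersections.
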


\begin{proof}  Consider   the  theta graph $\theta=\theta_1\theta_2$.  The inequality $c(\theta )\leq c(\theta_1)+c(\theta_2) $  is
 obtained through gluing of spanning disks for $\theta_1$,  $\theta_2$ into a spanning disk for $\theta$. We    prove the opposite inequality.
   By the definition of the vertex multiplication, there is a 2-sphere $\Sigma \subset S^3$
  that splits $S^3$ into two 3-balls~$B_0$ and~$B_1$
     containing the leg and the head of $\theta$ respectively, such that  $\Sigma$ meets each edge of  $\theta$ transversely at one point
and $(S^3, \theta_{i})$ is obtained from  $(S^3, \theta)$ by  the
contraction
      of $B_{1-i}$ into a point   for $i=0, 1$.
Let $D\subset S^3$ be a  spanning disk  for $\theta$ whose interior
meets the $0$-labeled edge  of~$\theta$  transversely in $c(\theta)$
points. The sphere $\Sigma$ meets $\partial D$ transversely in two
points. Deforming $\Sigma$, we can additionally assume that $\Sigma$
meets $D$ transversely along a proper embedded arc and a system of
disjoint embedded  circles. Pick an innermost such circle $s\subset
\Int (D)$. The circle $s$ splits $\Sigma$ into two hemispheres
$\Sigma_0$,   $\Sigma_1$ and bounds a disk $D_s\subset D$ such that
 $\Sigma\cap D_s=\partial D_s=s$.      For $i=0, 1$, the hemisphere  $\Sigma_i$  meets $\theta$ in $n_i$ points with $0\leq n_i \leq 3$. The  union $D_s\cup \Sigma_i$ is a  2-sphere embedded  in $S^3$ and meeting $\theta$ in $n_i$ points.   Note that  the graph $\theta$ with all edges oriented from the leg to the head
is a  1-cycle in $S^3$ modulo~3. Since the algebraic intersection
number of such a cycle with $D_s\cup \Sigma_i$ is  zero,  $n_i\neq
1$. Since $n_0+n_1=\card ( \theta \cap \Sigma)=3$, one of the
numbers $n_0$, $n_1$ is equal to zero. Assume for concreteness that
$n_0=0$. Then the sphere $D_s\cup \Sigma_0$ is disjoint from
$\theta$. This sphere bounds a 3-ball in $S^3$ disjoint from
$\theta$. Pushing $\Sigma_0$   in this ball towards~$D_s$   and then
away from~$D$, we can isotope $\Sigma$ in $S^3$ into a new position
so that   $\Sigma \cap D$ has one component less. Proceeding by
induction, we reduce  ourselves to the case  where $  \Sigma  \cap
D$ is a single arc.

Under isotopy of $\Sigma$ as above, the balls $B_0$, $B_1$ bounded by $\Sigma$  follow along and keep the properties   stated at the beginning of the proof. The arc $  \Sigma  \cap D$ splits $D$ into two half-disks $D \cap B_0$ and $D\cap B_1$ pierced by the $0$-labeled edge  of~$\theta$  transversely in    $m_0$ and $m_1$ points respectively.
By the choice of $D$, we have   $m_0+m_1=c(\theta)$. On the other hand, for $i=0, 1$,  the contraction  of $B_{1-i}$ into a point transforms
 $D \cap B_i$ into a spanning disk for $\theta_{i}$
 pierced by the $0$-labeled edge  of $\theta_i$  transversely in~$m_i$ points.
 Thus, $m_i \geq c(\theta_i)$. Hence $c(\theta)=m_1+m_2 \geq c(\theta_1)+c(\theta_2)$.
\end{proof}

\subsection{Proof of Lemma \ref{le1}}  One direction is obvious: if $\kappa$ splits as a sum of two non-trivial knots $\kappa_1$, $\kappa_2$, then
 ${\tau}(\kappa)$ splits as a product of the
non-trivial theta-curves ${\tau}(\kappa_1)$ and ${\tau}(\kappa_2)$.
Suppose   that $\kappa$ is prime. We assume that ${\tau}(\kappa)$
splits as a vertex product  of two non-trivial theta-curves   and
deduce a contradiction.

Recall that ${\tau}(\kappa)$ is obtained  by tying $\kappa$ on the
0-labeled edge of a trivial theta-curve $\eta\subset S^3$. We assume
that the tying proceeds inside a small closed 3-ball $B\subset S^3$
such that  $B\cap \eta$ is a
  sub-arc of the 0-labeled edge of $\eta$.  The knot
$\kappa$ is tied in   this sub-arc  inside $B$. The resulting
(knotted) arc in $B$ is denoted by the same symbol $\kappa$, see
Figure~\ref{fig6}. In this notation, ${\tau}(\kappa)=(\eta-B)\cup
\kappa$. The arcs $B\cap \eta$ and $\kappa$ have the same endpoints
$a,b$; these endpoints are  called the {\it poles} of   $ B$.

\begin{figure}[h,t]
\includegraphics[width=9cm,height=4cm]{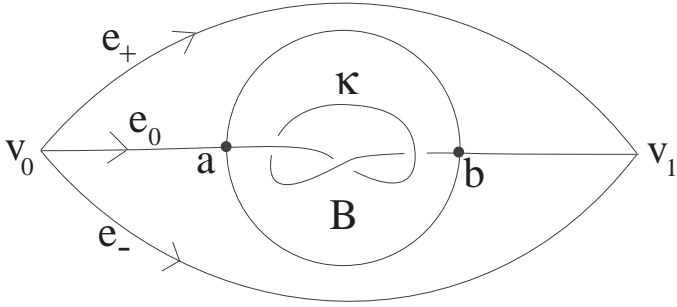}
\caption{The theta-curve ${\tau}(\kappa)$}\label{fig6}
\end{figure}

Let $\Sigma\subset S^3$ be a 2-sphere meeting   each edge of $\tau(
\kappa)$ transversely in one point and  exhibiting $\tau( \kappa)$
as a   product of   non-trivial theta-curves $\theta_1 $ and $
\theta_2$.   Slightly deforming $\Sigma$, we can assume that $a,
b\notin \Sigma$ and $\Sigma$ intersects $\partial B$ transversely.
We shall   isotope $\Sigma$ in $S^3 $ (keeping the requirements on
$\Sigma$ stated above) in order to reduce $\Sigma \cap \partial B$
and eventually to obtain $\Sigma\cap
\partial B=\emptyset$. Then    $\Sigma\cap B=\emptyset$ and~$\Sigma$ exhibits
$\eta$ as a product   of two theta-curves $\theta'_1$ and $
\theta'_2$. One of them is disjoint from~$B$ and coincides with
$\theta_1$ or $\theta_2$. By the Wolcott theorem stated in
Section~\ref{tc1+}, $\theta'_1= \theta'_2=1$. This contradicts the
non-triviality of $\theta_1$ and $\theta_2$.

The components of $\Sigma \cap \partial B$ are  circles   in the
2-sphere $\partial B$  disjoint from each other and from the  poles
$ a,b \in \partial B$. Suppose that  one of these circles, $s$,
bounds a disk $D_s$ in $ \partial B- \{a,b\}$.  Replacing if
necessary $s$ by an innermost component of $\Sigma \cap \partial B$
 lying in this disk, we can assume that   $
\Sigma \cap \Int (D_s)=\emptyset$. The circle~$s$ splits $\Sigma$
into two hemispheres. The same argument as in the proof of
Lemma~\ref{le2} shows that one of these hemispheres is disjoint from
${\tau}(\kappa)$ and its union with $D_s$ bounds a ball in
$S^3-{\tau}(\kappa)$. Pushing this hemisphere   inside this ball
towards $D_s$   and then away from $\partial B$, we can isotope
$\Sigma$ in $S^3$ into a new position so that   $\Sigma\cap \partial
B$ has one component less. Proceeding by induction, we reduce
ourselves to the case  where all components of $\Sigma \cap \partial
B$ separate the poles $a$, $b$ in $\partial B$. In particular, the
linking numbers of any component of $\Sigma \cap \partial B$ with
the   constituent knots $({\tau}(\kappa))_{+}$ and
$({\tau}(\kappa))_{-}$ are equal to $\pm 1$.

If $\Sigma\cap B$ has a disk component, then this disk   meets
$\kappa\subset B$ in one point and splits $B$ into two balls $B_1$,
$B_2$.   Since $\kappa$ is prime, one of the ball-arc pairs $(B_1,
B_1\cap \kappa)$, $(B_2, B_2\cap \kappa)$ is trivial. Pushing
$\Sigma$  away across this ball-pair, we can isotope~$\Sigma$
in~$S^3$ into a new position such that $\Sigma\cap
\partial B$ has one component less (and still all these components
separate the poles). Thus, we may assume that $\Sigma\cap B$ has no
disk components.

Let $B^c=S^3-\Int (B)$ be the complementary 3-ball of $B$. If
$\Sigma\cap B^c$ has a disk component $D$, then the linking number
considerations show that  either $D$ meets  the 0-labeled edge  of
${\tau}(\kappa)$  in at least one  point or $D$ meets  each of the
other two edges  of ${\tau}(\kappa)$  in at least one point.
Since~$\Sigma$ meets each edge of  ${\tau}(\kappa)$ only in one
point, $\Sigma\cap B^c$ may have at most two disk components. This
implies that the 1-manifold $\Sigma\cap \partial B$ splits~$\Sigma$
into  several annuli and two disks lying in $B^c$. One of these two
disks, say $D_1$, meets the 0-labeled edge  of ${\tau}(\kappa)$
 in  one  point $d$.
Observe that the intersection of the 0-labeled edge  of
${\tau}(\kappa)$ with $B^c$ has two components containing the poles
$a, b\in \partial B=\partial B ^c$. Assume, for concreteness,
 that   $d$ and   $a $ lie in the same component of this intersection. The circle $\partial D_1\subset \Sigma \cap \partial B$ bounds a disk
 $D_a$ in $\partial B$ containing $a$ (and possibly containing other components of $\Sigma\cap \partial B$).
 The union $D_1\cup D_a\subset B^c$ is an embedded 2-sphere meeting $\eta$ in
$a$ and $d$. This sphere bounds a 3-ball $B_+\subset B^c$ whose
intersection with $\eta$ is the sub-arc of the 0-labeled edge
of~$\eta$ connecting $d$ and $a$. The triviality of $\eta$ implies
that this arc is   unknotted in $B_+$.  Pushing $D_1$  in  $B_+$
towards  $D_a$ and then inside $B$, we can  isotope $\Sigma$
in~$S^3$ into a new position so that   $\Sigma\cap \partial B$ has
at least one component less. This isotopy creates  a disk component
of $\Sigma\cap  B$  which can be further eliminated as explained
above. Proceeding recursively, we eventually  isotope $\Sigma$ so
that it does no meet~$
\partial B$.
\qed

 \section{Proof of Theorems \ref{th1}--\ref{th3} }\label{section4}

We begin with a  geometric
 lemma.

 \begin{lemma}\label{le3} An orientation preserving diffeomorphism
 $f:S^3\to S^3$ fixing pointwise an unknotted circle $S\subset S^3$  is
 isotopic to the identity $\id:S^3\to S^3$ in the class of diffeomorphisms $S^3\to S^3$ fixing   $S $
 pointwise.
\end{lemma}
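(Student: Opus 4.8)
The plan is to reduce the statement to the connectivity of the diffeomorphism group of a $3$-ball rel boundary (Cerf's theorem), exploiting the fact that the complement of the unknot is a solid torus. Fix a tubular neighborhood $N\cong S\times D^2$ of $S$ in $S^3$, with $S=S\times\{0\}$, and set $M=S^3-\Int(N)$. Since $S$ is unknotted, $M$ is a solid torus glued to $N$ along the torus $T=\partial N$ in such a way that the longitude $\lambda$ of $N$ (a curve $S\times\{\mathrm{pt}\}$) is the meridian of $M$, while the meridian $\mu=\{\mathrm{pt}\}\times\partial D^2$ of $N$ bounds no disk in $M$.

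First I would normalize $f$ near $S$. Because $f$ fixes $S$ pointwise, $f(N)$ is a second tubular neighborhood of $S$; by uniqueness of tubular neighborhoods I can isotope $f$, rel $S$, to a diffeomorphism preserving $N$, and then, by the usual straightening of a diffeomorphism along a fixed submanifold, to one whose restriction to $N$ is the linear bundle map $(x,v)\mapsto(x,A(x)v)$ determined by the normal derivative $A\colon S\to GL^+(2,\RR)$ (here $A$ takes values in $GL^+$ because $f$ preserves the orientations of both $S$ and $S^3$). The homotopy class $[A]\in\pi_1(GL^+(2,\RR))=\pi_1(SO(2))=\ZZ$ records a winding number $w$.

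The key point is that $w=0$. On $T$ the linearized map acts as the $w$-th power of the Dehn twist along $\mu$ composed with a fibrewise rotation, so that $f_*\lambda=\lambda+w\mu$ in $H_1(T)$. But $f$ restricts to a self-diffeomorphism of the solid torus $M$ and therefore must carry the meridian $\lambda$ of $M$ to a multiple of itself in $H_1(T)$; comparing the two expressions forces $w=0$. Consequently $A$ is null-homotopic in $SO(2)$, and the corresponding path of fibrewise rotations of $T$ extends over $M$, since a fibrewise rotation in the $\mu$-direction is exactly a rotation of the $D^2$-fibres of $M$. Using this extension I can isotope $f$, rel $S$, to a diffeomorphism equal to the identity on $N$.

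It remains to treat the restriction $g=f|_M$, an orientation-preserving diffeomorphism of the solid torus $M$ that is the identity on $\partial M$; extending any isotopy of $g$ rel $\partial M$ by the identity on $N$ yields an isotopy of $f$ rel $S$. Let $D\subset M$ be a meridian disk, so $\partial D=\lambda$. Since $g$ fixes $\partial M$, $g(D)$ is a properly embedded disk with the same boundary, and two such disks in the irreducible manifold $M$ are isotopic rel boundary, so I may assume $g(D)=D$; as $\mathrm{Diff}(D^2\,\mathrm{rel}\,\partial D^2)$ is connected, I may further assume $g|_D=\id$. Cutting $M$ along $D$ produces a $3$-ball on which $g$ induces a diffeomorphism fixing the entire boundary, and by Cerf's theorem ($\pi_0\,\mathrm{Diff}(D^3\,\mathrm{rel}\,\partial D^3)=0$) this is isotopic to the identity rel boundary; the isotopy descends to the required isotopy of $g$ rel $\partial M$. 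The main obstacle is the straightening of $f$ near $S$ together with the verification that $w=0$; once $f$ has been made the identity near $S$, the solid-torus reduction and Cerf's theorem complete the argument.
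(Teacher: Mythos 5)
Your proof is correct and follows essentially the same strategy as the paper's: normalize $f$ on a tubular neighborhood $N$ of $S$, show that the resulting twisting invariant vanishes using the unknottedness of $S$, make $f$ the identity on $N$, and conclude via the connectivity of the group of rel-boundary diffeomorphisms of the complementary solid torus, which ultimately rests on Cerf's theorem. Two steps differ in execution. First, you detect the winding number homologically: $f_*\lambda=\lambda+w\mu$ in $H_1(\partial N)$ must become $0$ in $H_1(M)$ because $\lambda$ bounds a meridian disk of $M$, forcing $w=0$. The paper identifies the same integer with the linking number of $S$ and $f(S\times\{p\})$, which vanishes because this curve equals $\partial f(D')$ for a disk $D'$ in the complement of $S$; the two arguments are equivalent uses of the disk spanned by the longitude. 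Second, where the paper simply cites Ivanov \cite{iv} (whose proof uses Cerf's theorem $\Gamma_4=0$ and Laudenbach's work) for the fact that a diffeomorphism of a solid torus fixing the boundary pointwise is isotopic to the identity rel boundary, you prove it directly by the standard meridian-disk argument: isotope $g(D)$ to $D$ rel boundary (two properly embedded disks with common boundary in an irreducible manifold are isotopic rel boundary), apply Smale's theorem on $D$, cut along $D$ to a ball, and invoke Cerf; this makes the proof more self-contained, modulo the routine omitted details (normalizing the normal derivative of $g$ along $D$ before cutting, and noting that $g$ cannot interchange the sides of $D$ since it fixes $\partial M$ pointwise). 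One slip to correct: a fibrewise rotation of $T=\partial N$ in the $\mu$-direction is \emph{not} a rotation of the $D^2$-fibres of $M$ --- those fibres have boundary $\lambda$, so rotating them moves points along $\lambda$; the $\mu$-rotation instead extends over $M$ as a rotation along the $S^1$-factor, $\mu$ being a longitude of $M$. Alternatively, no explicit extension is needed: the untwisting isotopy of $N$ can be damped in a collar of $\partial N$ inside $M$ by the isotopy extension theorem, keeping $S$ fixed. Neither point is a genuine gap.
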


\begin{proof} Pick a
  tubular neighborhood $N \subset S^3$ of $S$. We have $N=S \times {D}$, where ${D}$ is a 2-disk and the identification $N=S \times {D}$ is chosen so
  that for $p\in \partial {D}$,
  the longitude $S\times \{p\}\subset \partial N $  bounds a disk  $D'$  in   $N^c=S^3-\Int (N)$. We can deform~$f$ in the class of diffeomorphisms of $S^3$ fixing $S$ pointwise
  so that $f(N)=N$ and~$f$ commutes with the projection $N\to S$. Then the  diffeomorphism $f\vert_{\partial N}: {\partial N}\to {\partial N}$ induces   a loop  $\alpha_f:S\to {\rm
  {Diff}}
   ({\partial D})$  in  the group   of   orientation preserving diffeomorphisms of
   the circle $ \partial D$. This group is a homotopy circle and    $\pi_1({\rm {Diff}} ({\partial D}))=\ZZ$. The
   integer corresponding to $\alpha_f$ is nothing but the linking
   number of $S$ with   $f(S\times \{p\})$. Since
  $f(S\times \{p\})=\partial f(D') $,   this linking number   is equal to 0. Thus, the
   loop $\alpha_f$ is contractible. This allows us to deform $f$   in the class of diffeomorphisms $S^3\to S^3$ fixing   $S
   $ pointwise in a diffeomorphism, again denoted $f$, such that   $f(N)=N$, $f$ commutes with the projection $N\to S$,
   and $f=\id$  on $\partial N$. Now, the  diffeomorphism $f\vert_N:N\to N$ induces   a loop   in  the group   of   orientation preserving diffeomorphisms of
   ${D}$ fixing pointwise $\partial D$ and the center of ${D}$. This group is   contractible and therefore  the
   loop in question also is contractible. This allows us to deform $f$   in the class of diffeomorphisms $S^3\to S^3$ fixing   $S
   \cup \partial N $ pointwise in a diffeomorphism, again denoted $f$, such that    $f=\id$ on $N$.
 The restriction of $f$ to the solid torus  $T=N^c$ is    an
  orientation-preserving
 diffeomorphism   fixing~$\partial T$ pointwise. Then $f\vert_T$ is
  isotopic to the identity $\id_T:T\to T$ in the class of diffeomorphisms $T\to T$   fixing~$\partial T$
  pointwise, see
  Ivanov \cite {iv}, Section 10 (the  proof of this fact uses the famous
theorem of Cerf $\Gamma_4=0$ and the work of Laudenbach \cite{la}).
  Extending the isotopy between $f\vert_T$ and  $\id_T $
  by the identity on $N$ we obtain an isotopy of $f$ to the identity
  constant on $S$.
\end{proof}

\subsection{A map $t:\mathcal
 K(S^2) \to \Theta^s  $}\label{pp1} Starting from a knotoid diagram
 $K\subset \RR^2$, we construct  a simple theta-curve   as follows. Let $v_0, v_1  $ be the leg and the head of~$K$. Pick  an  embedded arc  $a\subset \RR^2$ connecting
  $v_0$ to $v_1$. We   identify $\RR^2$ with the coordinate
  plane $\RR^2\times \{0\} \subset \RR^3 $. Let $e_{+}  $ (respectively, $e_{-}$) be the
  arc in~$\RR^3$ obtained by pushing the interior of $a$ in the
  vertical direction in the
  upper (respectively lower)
  half-space  keeping  the endpoints  $v_0, v_1 \in \RR^2\times \{0\}$.
    Pushing the underpasses of~$K$ in the lower half-space we
  transform $K$ into an embedded arc   $e_0\subset \RR^3$ that meets $e_-\cup
  e_+$
  solely at   $v_0$ and $v_1$. Then $\theta = e_-\cup
  e_0\cup
  e_+$ is a  theta-curve in $S^3=\RR^3\cup \{\infty\}$. It  is
  simple because   $e_-\cup
  e_+  =\partial D_a$   for a (unique) embedded 2-disk  $D_a \subset a\times \RR  $  such that $  D_a\cap ( \RR^2\times \{0\})=a$. The same arguments as in Section \ref{kd2} show that the isotopy class of
  $\theta$ does    not depend on the choice of~$a$ and  depends only on the knotoid   $k\in \mathcal K(S^2)$ represented by
     $K$. We denote this isotopy  class by $t(k)$. This construction defines a map  $t:\mathcal
 K(S^2) \to \Theta^s  $. For example,  if
 $k=\kappa^\bullet$ for a knot $\kappa\subset S^3$, then $t(k)=
 {\tau}(\kappa)$  is the theta-curve introduced in   Section
 \ref{tc1}.

The following theorem yields a geometric
 interpretation of knotoids in $S^2$ and computes the semigroup $\mathcal
 K(S^2) $ in terms of theta-curves.

 \begin{theor}\label{th777} The map $t:\mathcal
 K(S^2) \to \Theta^s  $ is a semigroup isomorphism.
\end{theor}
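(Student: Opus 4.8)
The plan is to establish three facts separately: that $t$ is a semigroup homomorphism, that it is surjective, and that it is injective; injectivity is the heart of the matter and is where Lemma~\ref{le3} enters. That $t$ is well defined and lands in $\Theta^s$ has already been checked above, so I take these for granted. To see that $t(k_1k_2)=t(k_1)\,t(k_2)$, I would represent $k_1,k_2$ by normal diagrams and form $K_1K_2$ by attaching a copy of $K_2$ to the head of $K_1$ in a small disk, as in Section~\ref{kd3}. Taking the auxiliary arc for $K_1K_2$ to be the concatenation of the arcs chosen for $K_1$ and $K_2$, the construction of $t$ becomes local near the point $p$ where $K_1$ meets $K_2$. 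A small round 2-sphere about $p$ then meets each of $e_-,e_0,e_+$ transversely in one point, and cutting along it recovers exactly the two theta-curves built from $K_1$ and from $K_2$. Since this sphere is a vertex-multiplication sphere in the sense of Section~\ref{tc1+}, one reads off $t(k_1k_2)=t(k_1)\,t(k_2)$ directly.

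For surjectivity, let $\theta\in\Theta^s$. Because $\theta$ is simple, its constituent knot $\theta_0=e_-\cup e_+$ is trivial and bounds a spanning disk. Using that an unknot equipped with a spanning disk is standard, I would isotope $\theta$ so that $e_-\cup e_+$ is the boundary of the vertical disk sitting over a straight arc $a\subset\RR^2\times\{0\}$, with $e_+$ the upper and $e_-$ the lower boundary arc and with $v_0,v_1$ the endpoints of $a$. After a further small isotopy of $e_0$, rel its endpoints and keeping it off $e_-\cup e_+$, into general position for the vertical projection, the image of $e_0$ in $\RR^2$ is a knotoid diagram $K$ with leg $v_0$ and head $v_1$. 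By construction the theta-curve associated with $K$ has the same spanning disk and an $e_0$-edge with the same diagram, hence is isotopic to $\theta$; thus $t([K])=\theta$.

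The main work is injectivity. Suppose $t(k)=t(k')$. Choosing normal diagrams with a common leg $v_0$, head $v_1$, and auxiliary arc $a$, the theta-curves $\theta=t(k)$ and $\theta'=t(k')$ share the very same edges $e_-,e_+$ (the push-offs of $a$) and differ only in their middle edges $e_0$ and $e_0'$. An isotopy of theta-curves provides an orientation-preserving diffeomorphism $\phi\colon S^3\to S^3$ with $\phi(\theta)=\theta'$ respecting all labels; in particular $\phi$ fixes $v_0,v_1$ and carries $C:=e_-\cup e_+$ to itself preserving each edge. After composing with an ambient isotopy I may assume $\phi$ fixes $C$ pointwise. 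Now $C=\theta_0$ is unknotted because $\theta$ is simple, so Lemma~\ref{le3} applies and $\phi$ is isotopic to the identity through diffeomorphisms $\phi_t$ fixing $C$ pointwise. The path $t\mapsto\phi_t(e_0)$ is then an isotopy in $S^3$ from $e_0'$ to $e_0$ fixing the endpoints $v_0,v_1$; and since each $\phi_t$ fixes $C$ pointwise, the moving arc meets $C$ only in $v_0,v_1$ at every instant.

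It remains to convert this three-dimensional isotopy of $e_0$, rel endpoints and off $C$, into diagram moves relating $K$ and $K'$. By the usual Reidemeister argument for arcs, a generic such isotopy projects to planar isotopies and the moves $\Omega_1,\Omega_2,\Omega_3$, together with moves $\Omega_-,\Omega_+$ that push a strand past an endpoint. The key observation is that a move of type $\Omega_\pm$ is precisely one in which $e_0$ is dragged past an endpoint through the lower or upper half-space, that is, across the edge $e_-$ or $e_+$; since our isotopy never meets $C=e_-\cup e_+$, no such move occurs. Hence $K$ and $K'$ are related by isotopy in $S^2$ and the moves $\Omega_1,\Omega_2,\Omega_3$ applied away from the endpoints, so $k=k'$. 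I expect this last translation --- matching the $\Omega_\pm$ moves with crossings of $C$ and arranging genericity of the projection (including the behaviour near $\infty$) --- to be the principal technical obstacle, whereas the homomorphism and surjectivity steps are routine.
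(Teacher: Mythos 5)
Your homomorphism step and your reduction of injectivity to Lemma~\ref{le3} follow the paper's route (the paper likewise uses Lemma~\ref{le3} to show that isotopic standard theta-curves are isotopic \emph{through standard ones}, and then compares projections of the middle edges). But your ``key observation'' --- that a strand of $e_0$ can sweep across an endpoint in the projection only by crossing $e_-$ or $e_+$ --- is false, and it sits exactly where the real content of the theorem lies. The edges $e_\pm$ are push-offs of the \emph{interior} of the arc $a$; they meet the vertical line $\{v_0\}\times\RR$ only at the vertex $v_0$ itself. Hence a branch of $e_0$ can cross that line at any height $h\neq 0$ without touching $C=e_-\cup e_+$, and in the projection this is precisely a strand sweeping across an endpoint, a forbidden-type move. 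Such events are generically isolated in a $1$-parameter family but cannot be eliminated: for instance, the theta-curves associated to the bifoil diagram $B_2$ and to the trivial diagram are isotopic (both knotoids are trivial in $S^2$), while Section~\ref{kd6==ppp} shows via the polynomial $[\,]_\circ$ that these two diagrams are \emph{not} related by $\Omega_1,\Omega_2,\Omega_3$ and isotopy in $\RR^2$; so any isotopy of $e_0$ avoiding $C$ joining them must involve endpoint sweeps. Indeed, your argument taken at face value would equally prove injectivity of the analogous map on $\mathcal K(\RR^2)$ (a compact isotopy in $\RR^3$ projects to moves in $\RR^2$; your invocation of ``isotopy in $S^2$'' has no source in the argument), contradicting the non-injectivity of $\iota\colon \mathcal K(\RR^2)\to\mathcal K(S^2)$.

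A second, related inaccuracy affects your surjectivity step: the projection of $e_0$ with its crossing data represents the knotoid corresponding to $\theta$ only when $e_0$ meets the band $a\times\RR$ \emph{inside} the standard spanning disk $D$. Two positions of $e_0$ with identical projections, one crossing the band below $e_+$ and the other above it, give non-isotopic theta-curves, so ``same spanning disk and same diagram, hence isotopic'' is unjustified as stated (though this is easily repaired by first clearing $e_0\cap\bigl((a\times\RR)-D\bigr)$). The paper's proof handles both issues at once by constructing the inverse map $u$: one pulls the branches of $e_0$ meeting $(a\times\RR)-D$ horizontally across the lines $v_0\times\RR$ or $v_1\times\RR$ before projecting, and the crux --- the paper's ``key point'' --- is that these pulls change the diagram by endpoint sweeps which nonetheless preserve the knotoid \emph{in $S^2$}, by the across-$\infty$ argument of Section~\ref{kd2} (push the strand over, resp.\ under, everything, around the sphere, and back). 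This essential use of the sphere is missing from your proposal, which flags only ``behaviour near $\infty$'' as a technicality; without it the injectivity argument does not close.
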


\begin{proof}  That $t$ transforms multiplication of knotoids into vertex multiplication of theta-curves follows   from the definitions. To prove that $t$ is bijective we construct the inverse map
$ \Theta^s\to \mathcal
 K(S^2) $.

 Let $\theta\subset S^3=\RR^3\cup \{\infty\}$ be  a   theta-curve  with vertices $v_0, v_1$
 and edges $e_-, e_0, e_+$. We say that $\theta$ is {\it standard}
 if $\theta\subset \RR^3$, both vertices of $\theta$ lie in $  \RR^2=\RR^2\times
 \{0\}$,   the edge  $e_+ $ lies in the upper half-space, the edge  $e_- $ lies in the lower
 half-space, and   $e_+ $,  $e_- $ project bijectively to the same embedded
 arc $a\subset \RR^2$ connecting $v_0$ and $v_1$. A
 standard theta-curve   is simple and has a  \lq\lq standard" spanning disk   bounded by
   $e_+\cup e_-$ in   $a\times \RR$.

   Observe that any simple theta
   curve~$\theta\subset S^3$ is (ambient) isotopic to a standard theta-curve. To see
 this,   isotope~$\theta$ away from     $\infty\in S^3$ so that  $\theta\subset
 \RR^3$,   pick a spanning disk for   $\theta$ and apply an
 (ambient)  isotopy pulling this disk in a standard position as
 above.

 We claim that   if two   standard theta-curves $\theta,
\theta'\subset \RR^3$ are
 isotopic, then they are isotopic in the class of standard theta-curves.  Indeed, we
 can easily deform $\theta'$ in the class of standard theta-curves so that $\theta$ and $\theta'$ share the same vertices and the same $\pm$-labeled
 edges. Let $S$ be the union of these vertices and   edges. The set $S$ is an unknotted circle in $S^3$.
 Since
$\theta$ is isotopic to~$\theta'$,   there is an
 orientation-preserving  diffeomorphism $f:S^3\to S^3$ carrying
 $\theta$ onto~$\theta'$ and preserving the labels of the vertices
 and the edges. Then $f(S)=S$. Deforming~$f$, we can additionally
  assume  that $f\vert_S=\id$. By  the previous lemma,~$f$ is isotopic to the
  identity $\id:S^3\to S^3$ in the class of diffeomorphisms fixing~$S $
 pointwise. This isotopy induces an isotopy of $\theta'$  to
  $\theta$ in the class of standard theta-curves.

 The results above show that without loss of generality we can focus on the class of standard theta-curves and their isotopies in this class. Consider a standard
theta-curve $\theta\subset  \RR^3 $.  We shall apply to $\theta$ a
sequence of  (ambient) isotopies
   moving only  the interior of  the 0-labeled edge~$e_0$ and keeping fixed
the other two edges $e_-$, $e_+$ and  the vertices. Let $a\subset
\RR^2$ be the common projection of $e_-$, $e_+$ to $\RR^2$ and let
$D\subset a\times \RR$ be the standard spanning disk for $\theta$.
First, we isotope~$e_0$ so that it meets $a\times \RR$ transversely
in a finite number of points. The intersections of~$e_0$  with
$(a\times \RR)-D$ can be eliminated by pulling the corresponding
branches of~$e_0$ in the horizontal direction across $v_0\times \RR$
or $v_1\times \RR$. In this way, we can isotope~$e_0$ so that all
its intersections with $a\times \RR$ lie inside $D$. Then we further
isotope $e_0$   so that its projection   to $\RR^2$   has only
double transversal crossings.  This projection is provided with
over/under-crossings data in the usual way and becomes
   a knotoid diagram. The knotoid
$u(\theta)\in \mathcal K (S^2)$ represented by this diagram depends
only on $\theta$ and does not depend on the choices made in the
construction. The key point is that pulling a branch of $e_0$ across
$v_0\times \RR$ or across $v_1\times \RR$ leads to equivalent
knotoids in $S^2$, cf.\ the argument in Section \ref{kd2}. All other
isotopies of~$e_0$ are translated to  sequences of   isotopies and
Reidemeister moves on the knotoid diagram away from the vertices.
Observe finally that the knotoid $u(\theta)$ is preserved under
isotopy of $\theta$ in the class of standard theta-curves. Therefore
$u$ is a
 well-defined
 map $  \Theta^s\to \mathcal
 K(S^2) $. It is clear that the maps~$t$ and~$u$ are mutually inverse.
\end{proof}

 \subsection{Proof of Theorem \ref{th1} }\label{section3.2} By Lemma \ref{le1}, a knot $\kappa\subset S^3$ is prime if and only if the theta-curve ${\tau}(\kappa)$ is prime.  As we know, $\tau (\kappa)=t(\kappa^\bullet)$.
 Theorem \ref{th777} shows that $t(\kappa^\bullet)$ is prime if and only if the knotoid $\kappa^\bullet$ is prime.

  \subsection{Proof of Theorem  \ref{th2}}\label{section3.3}  Theorem  \ref{th2}   follows from  Theorem \ref{th777}
   and the Motohashi prime decomposition theorem for simple theta-curves.

 \subsection{Proof of Theorem  \ref{th3}}\label{section3.4} We claim that for any knotoid $k$ in $S^2$, its complexity $c(k)$ is equal
  to the complexity $c(t(k))$  of the simple theta-curve $t(k)$. Observe that to compute $c(k)$ we may use only knotoid diagrams   and
  their shortcuts lying in $\RR^2$.
 For any knotoid diagram $K\subset \RR^2$ of $k$ and any shortcut $a\subset \RR^2$ for $K$,
 the number of intersection points of the $0$-labeled edge of $t(k)$ with the spanning disk $D_a$ of $t(k)$ is equal to the number
  of intersections of the interior of~$a$ with $K$. Hence $c(k)\geq c(t(k))$.  Conversely, given a spanning disk $D$ of $t(k)$ meeting
  the $0$-labeled edge transversely in
 $c(t(k))$ points, we can isotope $t(k)$ and~$D$ as in the proof of Theorem \ref{th777} so that $D\cap \RR^2$ becomes  a shortcut
 for a diagram of~$k$ in~$\RR^2$. Therefore $c(k)\leq c(t(k))$. This proves the equality $c(k)= c(t(k))$.  This equality shows that the complexity map $c: {\mathcal K (S^2)}\to \ZZ$ is the composition of   $t:{\mathcal K (S^2)}\to \Theta^s$ with  the complexity map $c:\Theta^s\to \ZZ$.  Since the latter map is a semigroup homomorphism (Lemma \ref{le2}) and so is $t$, their composition is a semigroup homomorphism.

  \subsection{Remarks}\label{section3.5} 1. The existence of a prime decomposition of any knotoid $k\in   {\mathcal K (S^2)}$
  may be proved directly without referring to the Motohashi theorem. In fact, we can prove the following stronger claim.
  Let $N\geq 0$ be the number of factors (counted with multiplicity) in the decomposition of the knot  $k_-$ as a sum of prime knots.
  Set $M=c(k)+N$. We claim that $k$ splits as a product of at most $M$ prime knotoids.
  Indeed, let us split $k$ as a product of two non-trivial knotoids and then inductively   split all non-prime factors as long as it is possible.
   This process must stop at  (at most) $M$ factors. Indeed, suppose that $k=k_1k_2\cdots k_m$ is a decomposition of $k$ as a product of $m>M$
   non-trivial
  knotoids.   Theorem  \ref{th3} gives
  $\sum_{i}  c(k_i)=c(k)$. Therefore at most $c(k)$ knotoids among $k_1, \ldots, k_m$ have positive complexity. Since $m>M=c(k)+N$, at least $N+1$
   knotoids among $k_1, \ldots, k_m$ have complexity~$0$.
 A non-trivial   knotoid $k_i$ of complexity  $0$ has  the form $\kappa^\bullet$ for a non-trivial knot $\kappa\subset S^3$.
 The knot $\kappa$ may be recovered from $k_i$ via  $\kappa=(k_i)_-$.
 We conclude that in the expansion $k_-=(k_1)_-+ (k_2)_-+ \cdots + (k_m)_-$ the right-hand side has  at least $N+1$ non-trivial summands.
 This contradicts the  choice of $N$.

 2. Given a knotoid $k$ in $S^2$, we can use the theta-curve $t(k)$   to derive from $k$ one more
 knot in $S^3$.
 Consider the 2-fold covering
 $p:S^3\to S^3$ branched along the  trivial  knot formed by the $\pm$-labeled edges of $ t(k)
 $. The  preimage under $p$ of the 0-labeled edge of $t(k)$ is a
 knot in $S^3$ depending solely on $k$.

3.  Recall the  multi-knotoid diagrams in a surface $\Sigma$
introduced in Section~\ref{kd1-----}.      The classes of such
diagrams under the equivalence
 relation generated by isotopy in~$\Sigma$ and the three Reidemeister moves (away
 from the endpoints of the   segment component) are called {\it   multi-knotoids}  in
 $\Sigma$. The definitions and the theorems of Section~\ref{section2} directly  extend to multi-knotoids in~$S^2$.
 The proofs use  the     theta-links defined as embedded  finite graphs  in~$S^3$ whose components are oriented circles except one component which is a
  theta-curve. A theta-link is simple if its theta-curve component
  is simple.
  Theorem \ref{th777} extends to this setting and establishes an isomorphism between the semigroup of multi-knotoids in $S^2$
  and the semigroup of (isotopy classes of) simple theta-links. Note
  also that the Motohashi theorems extend to theta-links, see
  \cite{mmm}.

4. The theory of knotoids offers a diagrammatic calculus for simple
theta-curves. A similar calculus for arbitrary theta-curves can be
formulated in terms of bipointed  knot diagrams. An (oriented) knot
diagram is {\it bipointed} if it is endowed with an ordered pair of
generic points, called the leg and the head. A bipointed  knot
diagram~$D$ in $S^2$ determines a theta-curve $\theta_D\subset S^3$
by adjoining an embedded arc connecting the leg to the head and
running under $D$. This arc is the 0-labeled edge of  $\theta_D$,
  the   segment of $D$ leading from the leg to the head is the
$+$-labeled edge, and the third edge   is labeled by $-$. Clearly,
any theta-curve is isotopic to~$\theta_D$ for some   $D$. The
isotopy class of~$\theta_D$ is preserved   under the Reidemeister
moves on $D$ away from the leg and the head and under
  pushing a branch of $D$ over the leg or the head. (Pushing
a branch   under the leg or the head is forbidden). These moves
generate the isotopy relation on   theta-curves.

 \section{The bracket polynomial and the crossing number}\label{section77}

 \subsection{The bracket polynomial}\label{section5.1}  In analogy with Kauffman's bracket polynomial of knots, we
  define the     bracket polynomial for   knotoids in any oriented surface~$\Sigma$.
 By a {\it state} on a knotoid diagram $K\subset \Sigma$, we mean a mapping from the set of crossings of $K$ to the   set
 $\{-1,+1\}$.
 Given a state~$s$ on $K$, we   apply the A-smoothings (resp.\ the B-smoothings)
   at all  crossings of $K$ with positive (resp.\ negative) value  of $s$.
This   yields    a compact 1-manifold $K_s\subset \Sigma$ consisting
of a single embedded segment and several  disjoint  embedded
circles. Set
 $$\langle K\rangle =\sum_{s\in S(K)} A^{\sigma_s } (-A^2-A^{-2})^{\vert s\vert
 -1} ,$$
 where $S(K)$ is the set of all states of $K$,  $\sigma_s\in \ZZ$ is the sum of the values $\pm 1$ of $s\in S(K)$ over all  crossings of
 $K$,
 and $\vert s\vert$ is the number of components of~$K_s$.
 Standard computations show that the Laurent polynomial $\langle K\rangle\in \ZZ[  A^{\pm 1}] $ is invariant
 under the second and third Reidemeister moves on~$K$ and is multiplied by $(-A^3)^{\pm 1}$ under the first Reidemeister moves.
 The polynomial $\langle K\rangle$ considered up to multiplication
 by integral powers of $-A^3$ is an invariant of  knotoids     denoted $\langle\,   \rangle$ and called the {\it   bracket polynomial}.

One   useful invariant of knotoids derived from the bracket
polynomial is the   span. The {\it span}  of a non-zero Laurent
polynomial $f=\sum_i f_i A^i\in \ZZ[A^{\pm 1}]$ is defined by $\spn
(f)=i_+-i_-$, where $i_+$ (resp.\ $i_-$) is the maximal (resp.\ the
minimal) integer~$i$ such that $f_i\neq 0$. For $f=0$, set $\spn
(f)=-\infty$. The {\it span} $ \spn (K)$ of  a knotoid diagram $K$
is defined by $\spn (K)= \spn (\langle K \rangle)$. Clearly, $ \spn
(K)$ is invariant under all Reidemeister moves on $K$ and defines
thus a knotoid invariant also  denoted $\spn$. The span of any
knotoid is an even (non-negative) integer.

The indeterminacy   associated with
 the first Reidemeister moves can be handled  using the writhe. The
 writhe  $w(K)\in \ZZ$ of a knotoid diagram~$K$ is the sum of  the signs of the crossings of~$K$   (recall that $K$ is  oriented from
 the leg to the head).
 The  product  $ \langle K\rangle_\circ= (-A^3)^{- w(K)} \langle K\rangle$  is invariant under
 all Reidemeister moves on~$K$. The resulting invariant of  knotoids  is  called the {\it normalized bracket polynomial} and denoted
 $ \langle \,\rangle_\circ$.
It  is invariant
 under the reversion of knotoids and   changes
 via
  $A\mapsto A^{-1}$ under mirror reflection and under orientation reversion    in~$\Sigma$.  The normalized bracket polynomial is multiplicative:
  given a knotoid~$k_i$ in an oriented surface
 $\Sigma_i$ for $i=1,2$,
 we have $ \langle k_1k_2 \rangle_\circ=\langle k_1  \rangle_\circ \, \langle  k_2 \rangle_\circ  $.
This implies that   the span of knotoids is additive with respect to
multiplication of knotoids.

\subsection{An estimate of the crossing number}\label{section5.2} A   fundamental property  of the bracket polynomial
of knots established by L.\ Kauffman \cite{ka} is an inequality
relating the span  to the crossing number. This   generalizes to
knotoids as follows.

 \begin{theor}\label{thhspann}  Let   $\Sigma$ be an oriented
 surface. For any knotoid diagram $K \subset \Sigma$ with $n$ crossings,   \begin{equation}\label{cc-}\spn (K)\leq
 4 n .\end{equation}
  \end{theor}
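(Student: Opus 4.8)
The plan is to follow Kauffman's original argument \cite{ka}, adapted to knotoid diagrams, by controlling the two extreme states of the state sum. Write $s_A$ for the state assigning $+1$ to every crossing (all A-smoothings) and $s_B$ for the state assigning $-1$ to every crossing (all B-smoothings), and let $|s_A|,|s_B|$ be the numbers of components of $K_{s_A},K_{s_B}$ (each consisting of the single segment together with some circles). For an arbitrary state $s$ let $a(s)$ and $b(s)=n-a(s)$ be the numbers of crossings at which $s$ takes the values $+1$ and $-1$; then $\sigma_s=a(s)-b(s)$, and the corresponding summand $A^{\sigma_s}(-A^2-A^{-2})^{|s|-1}$ is supported in degrees between $\sigma_s-2(|s|-1)$ and $\sigma_s+2(|s|-1)$.

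First I would establish the switching estimate: passing from $s_A$ to $s$ changes exactly $b(s)$ smoothings, and each single change merges two components or splits one, so it alters the number of components by $\pm1$; hence $|s|\le|s_A|+b(s)$. Since $\sigma_s=n-2b(s)$, this gives $\sigma_s+2(|s|-1)=n-2b(s)+2|s|-2\le n+2(|s_A|-1)$, so every summand, and therefore $\langle K\rangle$ itself, has top degree at most $n+2(|s_A|-1)$. Symmetrically, $|s|\le|s_B|+a(s)$ yields bottom degree at least $-n-2(|s_B|-1)$. (Cancellation among summands can only shrink the occupied range, so these one-sided bounds survive.) Subtracting, $\spn(K)\le 2n+2(|s_A|+|s_B|)-4$. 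Note that all of this is purely local and valid in an arbitrary surface $\Sigma$.

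It then remains to prove the combinatorial inequality $|s_A|+|s_B|\le n+2$, which inserted into the last bound gives exactly $\spn(K)\le 4n$. This is the only point beyond bookkeeping, and it is the main obstacle, because the usual planar-duality proof (viewing the $s_A$-circles and $s_B$-circles as two dual sets of faces) is unavailable once $\Sigma$ has positive genus. The key observation to bypass this is that $|s_A|$ and $|s_B|$ depend only on how the strands reconnect at the crossings, i.e.\ only on the underlying Gauss data, and not on the ambient surface. I would therefore close the knotoid: join the leg to the head by an embedded crossing-free arc $a$ disjoint from $K$ and from all the relevant state circles, obtaining a \emph{connected} classical knot shadow $\widehat K=K\cup a$ with the same $n$ crossings (connectedness is automatic since $K$ is the image of a single interval). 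The arc $a$ turns the segment component of $K_{s_A}$ (resp.\ of $K_{s_B}$) into a circle and leaves the remaining circles untouched, so the all-A and all-B states of $\widehat K$ have exactly $|s_A|$ and $|s_B|$ circles.

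For the connected shadow $\widehat K$ the bound is now surface-independent: it is equivalent to the non-negativity of the genus $g_T\ge 0$ of the associated Turaev surface through the identity $|s_A|+|s_B|=n+2-2g_T$, which I would verify by the Euler-characteristic count for the closed orientable surface assembled from $|s_A|$ disks, $|s_B|$ disks and $n$ saddle bands (one may instead simply cite Kauffman's inequality \cite{ka} once the Gauss code of $\widehat K$ is realized planarly). This delivers $|s_A|+|s_B|\le n+2$, and combining it with the two degree estimates of the second paragraph completes the proof of \eqref{cc-}.
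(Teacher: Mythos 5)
Your first half is sound and coincides with the paper's: the extreme-state degree estimates, via $|s|\le |s_A|+b(s)$ and $|s|\le |s_B|+a(s)$, correctly reduce everything to the combinatorial inequality $|s_A|+|s_B|\le n+2$, exactly as in the paper's proof. But two steps in your treatment of that inequality do not survive scrutiny as written. First, the closing arc: an embedded arc in $\Sigma$ joining the leg to the head and disjoint from $K$ need not exist at all --- the minimal number of intersections of a shortcut with $K$ is precisely the paper's complexity $c(K)$, which is positive for many diagrams even in $S^2$ (e.g.\ the diagram of $\varphi$ in Figure 4, where every shortcut meets $K$). To make your closure legitimate you must either attach a handle to $\Sigma$ and run the arc through it, or perform the closure purely combinatorially on the abstract $4$-valent ribbon graph; both repairs are harmless for your purposes, since, as you yourself observe, $|s_A|$ and $|s_B|$ depend only on the signed Gauss/ribbon data, but the phrase ``embedded crossing-free arc $a$ disjoint from $K$'' is false as stated. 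Second, your parenthetical fallback --- ``simply cite Kauffman's inequality once the Gauss code of $\widehat K$ is realized planarly'' --- is not available: Gauss codes of diagrams on positive-genus surfaces are in general not planar-realizable, which is exactly the obstruction you acknowledged when discarding the planar-duality proof, so this alternative is circular. Finally, the orientability of the capped saddle surface is asserted without proof, and for nonplanar ribbon structures it is not obvious; fortunately it is also unnecessary, since every closed connected surface (orientable or not) has $\chi\le 2$, and your Euler count $\chi=|s_A|+|s_B|-n$ then yields $|s_A|+|s_B|\le n+2$ regardless.

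With those repairs your argument becomes a correct, surface-independent Euler-characteristic proof, and it is then a close cousin of the paper's, which, however, never closes the diagram: the paper thickens the underlying graph $\Gamma$ of $K$ itself into an abstract, possibly non-orientable surface $M$ (half-twisting the bands over edges joining two undercrossings or two overcrossings), observes that $\partial M$ consists of disjoint copies of $K_{s_+}$ and $K_{s_-}$ together with two arcs coming from the endpoint squares, and bounds $b_0(\partial M)\le b_0(M)+b_1(M)=2-\chi(M)=n+1$ using $\ZZ/2\ZZ$-homology and Poincar\'e duality, whence $|s_+|+|s_-|\le n+2$. The half-twisted thickening handles both the knotoid endpoints and the orientability issue (via mod $2$ coefficients) in one stroke, which is precisely where your version needed patching.
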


\begin{proof}   Let ${s_+}$
(resp.\ ${{s_-}}$) be the state of $K$ assigning $+1$ (resp.\ $-1$)
to all crossings. The same argument as in the case of knots shows
that
\begin{equation}\label{cc} \spn (K)=\spn (\langle
K \rangle)\leq 2(n +\vert {s_+}\vert +\vert {{s_-}}\vert-2).
\end{equation} To
estimate $\vert {s_+}\vert +\vert {{s_-}}\vert$,  we need the
following construction   introduced for knot diagrams in \cite{tu1}.
Let $\Gamma\subset \Sigma$ be the underlying graph of $K$. This
graph is connected and has  $n$ four-valent vertices, two 1-valent
vertices (the endpoints of~$K$), and $2n+1$ edges. We thicken
$\Gamma$ to a surface: every vertex is thickened to a small square
in $\Sigma$ and every edge $e$ of $\Gamma$   is thickened to a band.
If one endpoint of $e$ is 1-valent or   $e$ connects an
undercrossing to an overcrossing, then the band is a narrow
neighborhood of $e$ in $\Sigma$ meeting the square neighborhoods  of
the endpoints of $e$ along their sides in the obvious way. If both
endpoints of $e$ are
 undercrossings (resp.\   overcrossings), then one   takes the same
 band and half-twists it in the middle. The union of these squares
 and bands is a surface~$M$ containing~$\Gamma$ as a deformation
 retract. It is easy to check that~$\partial M$ is formed by
 disjoint
 copies of the 1-manifolds
 $K_{s_+}$ and  $K_{{s_-}}$ together with two   arcs joining the
 endpoints of $K_{s_+}$ and $K_{{s_-}}$. (These arcs come up as the sides of the squares obtained by thickening the endpoints of $K$.)  Therefore $\vert {s_+}\vert +\vert {{s_-}}\vert
 \leq
 b_0(\partial M)+ 1$, where $b_i$ denotes the $i$-th
 Betti number with coefficients in $\ZZ/2\ZZ$. Using the homology
 exact sequence of $(M, \partial M)$, the Poincar\'e duality, the connectedness of $M$,  and
 the Euler characteristic, we obtain
 $$b_0(\partial M)\leq b_0(M) +b_1(M,\partial M)=b_0(M)
 +b_1(M)=2-\chi(M)=n+1.$$
 Thus $\vert {s_+}\vert +\vert {{s_-}}\vert \leq n+2$. Together with
 \eqref{cc} this implies \eqref{cc-}.
\end{proof}

Theorem \ref{thhspann}  implies that for any knotoid $k$ in
$\Sigma$, \begin{equation}\label{cc27}\spn (k)\leq 4
 \cro(k), \end{equation} where $\cro (k)$ is the crossing number  of  $k $   defined as the
 minimal number of crossings in a  diagram of $k$.

\subsection{The case $\Sigma=S^2$}\label{section5.3} The
normalized bracket polynomial of knotoids in $S^2$ generalizes the
Jones
 polynomial of knots in $S^3$: for any knot $\kappa\subset S^3$, the
  polynomial    $ \langle \kappa^\bullet \rangle_\circ    $ is obtained from
 the Jones polynomial of $ \kappa  $ (belonging to $ \ZZ[  t^{\pm 1/4}]$) by the
 substitution $t^{\pm 1/4}=A^{\mp 1}$. For knotoids in $S^2$, Formula \eqref{cc27} has the following
 addendum.

  \begin{theor}\label{thspannew}  For a  knotoid $k$ in
  $S^2$,   we have $\spn (k)= 4
 \cro(k) $ if and only if $k=\kappa^\bullet$, where $\kappa $ is an alternating knot  in
 $S^3$. In particular,    $\spn (k)\leq  4
 \cro(k) -2$ for any pure knotoid  $k$ in
  $S^2$.
  \end{theor}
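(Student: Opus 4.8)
The plan is to prove the stated equivalence and then deduce the displayed inequality for pure knotoids as a cheap corollary. Indeed $\spn(k)$ is always even and $4\cro(k)$ is a multiple of $4$, so once I know that $\spn(k)=4\cro(k)$ forces $k=\kappa^\bullet$ with $\kappa$ alternating, every pure knotoid $k$ (which is never of the form $\kappa^\bullet$) must satisfy $\spn(k)<4\cro(k)$, hence $\spn(k)\le 4\cro(k)-2$ by parity. So the whole content is the equivalence.

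For the implication ``$k=\kappa^\bullet$ with $\kappa$ alternating $\Rightarrow \spn(k)=4\cro(k)$'', I would first observe that the knotoid bracket of $\kappa^\bullet$ equals the ordinary Kauffman bracket of $\kappa$. Starting from a knot diagram $D$ of $\kappa$ and deleting a short arc off a crossingless segment to produce a knotoid diagram $K$ of $\kappa^\bullet$, every state $s$ simply converts one state circle of $D_s$ into the single state arc of $K_s$, so the component number $|s|$ and the exponent $\sigma_s$ are unchanged and $\langle K\rangle=\langle D\rangle$. Hence $\spn(\kappa^\bullet)=\spn\langle D\rangle$, the span of the knot bracket, which by the theorem of Kauffman, Murasugi and Thistlethwaite (\cite{ka}) equals $4\cro(\kappa)$ precisely because $\kappa$ is alternating. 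I then close the sandwich $4\cro(\kappa)=\spn(\kappa^\bullet)\le 4\cro(\kappa^\bullet)\le 4\cro(\kappa)$, whose outer inequalities are \eqref{cc27} and the remark that deleting an arc from a minimal knot diagram yields a knotoid diagram with the same crossing number; this forces $\cro(\kappa^\bullet)=\cro(\kappa)$ and $\spn(\kappa^\bullet)=4\cro(\kappa^\bullet)$.

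For the converse I would take a minimal diagram $K$ of $k$ with $n=\cro(k)$ crossings, so $\spn(K)=\spn(k)=4n$. Then \eqref{cc} gives $|s_+|+|s_-|\ge n+2$, and since the reverse bound always holds, $|s_+|+|s_-|=n+2$. Revisiting the surface $M$ from the proof of Theorem~\ref{thhspann}, this equality reads $b_0(\partial M)=n+1$; as $M$ is connected with $\chi(M)=(n+2)-(2n+1)=1-n$, and a single half-twisted band would render $M$ non-orientable and drop $b_0(\partial M)$ to at most $n$, the equality forces $M$ to be planar (orientable of genus $0$), all bands untwisted, i.e. $K$ alternating and $M=N(\Gamma)\subset S^2$. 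The decisive point is then the distinguished boundary component $C$ of $\partial M$ identified in Theorem~\ref{thhspann}: $C$ is the single circle made of the two state arcs of $K_{s_+}$ and $K_{s_-}$ joined by the two small arcs running around the leg and the head. Since $M\subset S^2$ is planar, $C$ bounds one complementary disk region $R_C$, and $C$ is incident to both the leg and the head. Joining them by an arc through $R_C$ gives a shortcut for $K$ with no interior intersections, so $c(K)=0$, whence $k=\kappa^\bullet$ with $\kappa=k_-=k_+$; closing $K$ by this crossingless shortcut exhibits an alternating diagram of $\kappa$, so $\kappa$ is alternating.

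The step I expect to be the main obstacle is exactly this separation of a genuine knot from a pure knotoid: a priori an alternating knotoid diagram could still be pure, which would destroy the theorem. What resolves it is the geometric remark that in the planar case the combined boundary circle $C$ surrounds both endpoints at once, so the leg and the head share a complementary region of $K$; this is precisely what fails in the non-alternating (in particular pure) case, where $M$ is non-orientable and $C$ no longer bounds a region of $S^2$. Once this observation is secured, both the equality case and the strict inequality $\spn(k)\le 4\cro(k)-2$ for pure knotoids follow, and notably no separate appeal to adequacy of the extremal diagram is needed.
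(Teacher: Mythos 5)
Your forward direction and the parity deduction of the ``in particular'' clause are fine and agree with the paper. The genuine gap is in the converse, at the step ``a single half-twisted band would render $M$ non-orientable \dots\ the equality forces $M$ to be planar, all bands untwisted, i.e.\ $K$ alternating.'' Neither implication holds. First, a half-twisted band does not by itself make $M$ non-orientable: orientability depends on the parity of the number of twisted bands along each cycle of $\Gamma$, and two twisted bands on one cycle cancel. Second, and fatally, triviality of the mod~$2$ intersection form (equivalently $\vert s_+\vert+\vert s_-\vert=n+2$) does \emph{not} force $K$ to be alternating. Concrete counterexample: take reduced alternating diagrams of two knots and form their connected sum joining an over-end to an over-end. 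The resulting diagram is minimal (crossing number is additive for alternating knots) and satisfies $\spn(K)=4\cro(K)$ by additivity of the span, yet it is non-alternating: it has two half-twisted bands, $M$ is orientable of genus $0$, and the form vanishes because the two region-cycles adjacent to each ``bad'' edge share a second edge across the connect-sum circle. So if your ``minimal diagram $K$'' happens to be of this kind, your argument concludes something false, and your subsequent identification $M=N(\Gamma)\subset S^2$ and the shortcut through the region bounded by $C$ collapse with it.

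This is exactly the difficulty the paper's proof is built to circumvent, and it is the ingredient your proposal is missing: one cannot work with an arbitrary minimal diagram. The paper introduces \emph{prime} knotoid diagrams (conditions (i) and (ii) of Section \ref{section5.3}), shows by induction on crossings that every knotoid has a minimal diagram splitting as a product of prime diagrams, and uses additivity of $\spn$ and $\cro$ under this product together with \eqref{cc27} to reduce to the prime case. Only for a prime diagram does the \cite{tu1}-style argument work: primality guarantees that the two regions of $S^2-\Gamma$ adjacent to any edge joining two $4$-valent vertices are distinct and their closures share no other edge, so if that edge joins two under- or two overcrossings the corresponding boundary cycles intersect once mod~$2$, contradicting triviality of the form; hence each prime factor is alternating. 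Finally the paper disposes of the knot-versus-pure dichotomy by a separate elementary observation (every alternating knotoid diagram in $S^2$ has complexity $0$, proved by analyzing the region adjacent to the head), rather than by your boundary-circle argument, and reassembles $k=\kappa^\bullet$ with $\kappa$ a connected sum of alternating knots, hence alternating. To repair your proof you would need to import this prime-decomposition step (or an equivalent adequacy/connected-sum analysis); as written, the converse does not go through.
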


\begin{proof} If  $k=\kappa^\bullet$ for    an alternating knot $\kappa $,
then we  can present~$\kappa$ by a
  reduced alternating knot diagram $D$. Removing  from $D$ a small open arc disjoint from the
  crossings, we obtain  a   knotoid diagram~$K $ of~$k$ such that $\langle K \rangle=\langle D \rangle$. Then    $$\spn (k) =\spn
(\langle K \rangle)=\spn (\langle D \rangle)= 4
 \cro(D) = 4
 \cro(K) \geq
 4\cro(k), $$
 where the third equality is a well known property of reduced alternating knot
 diagrams, see \cite{ka}.    Combining with   \eqref{cc27}, we obtain   $\spn (k)=
 4\cro(k)$.

 To prove the converse, we need more terminology.    A knotoid diagram is {\it alternating} if traversing the diagram
from the leg to the tail one meets
   under- and over-crossings  in an alternating order. A simple geometric argument shows that all alternating knotoid diagrams in $S^2$ have complexity 0.
   (For a diagram $K$ of   positive complexity consider the region of $S^2-K$ adjacent to the head of $K$. This region is not adjacent to the leg of~$K$.
   Analyzing the over/under-passes of the edges of this region, one easily observes that $K$ cannot be alternating.)

   Recall
   that for any  knotoid diagrams $K_1, K_2$ in $S^2$, we can form
    a product knotoid diagram  $K_1K_2 \subset S^2$   (see Section \ref{kd3}).  We call  a knotoid diagram   $K\subset S^2$ {\it prime} if

(i) every embedded circle in~$S^2$ meeting $K$ transversely in one
point bounds a regular neighborhood of one of the endpoints of $K$
and

(ii) every  embedded circle in~$S^2$ meeting $K$ transversely in two
points bounds a disk in~$S^2$ meeting $K$ along a proper embedded
arc or along two disjoint   embedded arcs adjacent to the endpoints
of $K$.

Condition (i) means that $K$ is not  a product of two non-trivial
knotoid diagrams.   An induction on the number of crossings shows
that every knotoid diagram splits as a product of a finite number of
knotoid diagrams satisfying (i). These diagrams may not   satisfy
(ii). If a diagram $K$ of a knotoid~$k$ does not satisfy (ii), then
$K$ can be obtained from some other knotoid diagram by tying a
non-trivial knot in a small neighborhood of a generic point. Pushing
this knot towards the head of $K$, we obtain a   knotoid diagram
 of $k$ that has the same number of crossings as $K$ and splits a product of two non-trivial knotoid diagrams. An induction on the
number of crossings shows that for any knotoid diagram $K$ of a
knotoid $k$, there is a knotoid diagram $K'$ of $k$ such that $\cro
(K')=\cro (K)$ and $K'$ splits as a product of prime knotoid
diagrams.

We claim that   any prime knotoid diagram $K\subset S^2$ satisfying
$\spn (K)= 4
 \cro(K) $  is      alternating. The   argument is parallel to the one in
\cite{tu1} and proceeds as follows. We use the notation
 introduced in the proof of Theorem \ref{thhspann}. The formula $\spn (K)= 4
 \cro(K) $ implies that $\vert {s_+}\vert +\vert {{s_-}}\vert = n+2$.  Hence,
 $b_0(\partial M)= b_0(M) +b_1(M,\partial M)$. The latter equality holds if and only if the
 inclusion homomorphism $H_1(M ; \ZZ/2\ZZ)\to H_1(M, \partial
 M; \ZZ/2\ZZ)$ is equal to 0. This is possible if and only if
 the   intersection form
$ H_1(M; \ZZ/2\ZZ)\times H_1(M; \ZZ/2\ZZ)\to \ZZ/2\ZZ$ is zero.
Since $K$ is prime, for any
 edge $e$
 of $\Gamma$ connecting two 4-valent vertices,
 the  regions of $S^2-\Gamma$  adjacent to $e$ are distinct and their
 closures
have no common edges besides~$e$. The boundaries of these closures
are cycles in $\Gamma\subset M$. If $e$ connects two  undercrossings
 or two
  overcrossings, then the intersection number in $M$ of these two cycles   is equal to $1(\modu 2)$ which contradicts the triviality of the
  intersection form. Hence $\Gamma$ has no such edges and $K$ is alternating.

We can now accomplish the proof of the theorem.   Let~$k$ be a
knotoid in~$S^2$ such that $\spn (k)=
 4\cro(k)$. Then any minimal diagram $K$ of $k$ satisfies  $\spn (K)=
 4\cro(K)$.  By the argument above, we can choose $K$ so that it is a product of prime diagrams $K_1, \ldots, K_r$.
  Observe that both  numbers $\spn (K)$ and $
 \cro(K) $ are additive with respect to multiplication of knotoid diagrams. The assumption $\spn (K)=
 4\cro(K)$   and the inequality \eqref{cc27} imply that  $\spn (K_i)=
 4\cro(K_i)$ for $i=1, \ldots, r$.  By the previous paragraph,  each $K_i$  is an alternating knotoid diagrams (of
  complexity 0).
 Therefore there are   alternating knots $\kappa_1,..., \kappa_r\subset S^3$ such that $k= \kappa^\bullet$ for  $\kappa=\kappa_1 + \cdots +
\kappa_r$. It remains to observe that the knot $\kappa$  is
alternating. \end{proof}

\subsection{Example}\label{kd6===--}   For the pure knotoid $\varphi$ in $S^2$ shown in Figure~\ref{fig4}, we have $\langle \varphi\rangle_\circ=A^{4}+A^{6}-A^{10}$.
 Clearly, $\spn (\varphi)=6$ and $\cro (\varphi)=2$. In this case,
 the inequality $\spn (\kappa)\leq 4\cro (\kappa)-2$ is an equality.

   \subsection{Remarks}\label{kd6===56--} 1.  Kauffman's notions of a virtual knot diagram and a virtual knot
extend to knotoids in the obvious way. The theory of virtual
knotoids is equivalent to the theory of knotoids in closed connected
oriented
 surfaces considered up to orientation-preserving homeomorphisms and attaching handles in the complement of knotoid diagrams.

 2. The following observation is due to Oleg Viro. Every knotoid~$k$ (or virtual knotoid) in an oriented surface  determines an oriented virtual knot
 through   the \lq\lq virtual closure": the
 endpoints of~$k$ are connected by a simple arc in the ambient surface;  all
 intersections of the arc  with~$k$ are declared to be virtual. This construction allows one to
 apply to knotoids the   invariants of virtual knots. For
 example, the normalized bracket polynomial of knotoids introduced above results in
 this way from the normalized bracket polynomial of virtual  knots.
 Using the virtual closure, we can introduce the Khovanov homology and the  Khovanov-Rozansky
 homology of knotoids (and more generally of multi-knotoids).

3. Any knotoid $k$   in an oriented surface $\Sigma$ determines an
oriented knot~$k^\circ$ in the 3-manifold $\Sigma'\times [0,1]$,
where $\Sigma'=\Sigma\# (S^1\times S^1)$. To obtain~$k^\circ$,
remove the interiors of  disjoint regular neighborhoods $B_0,B_1
\subset \Sigma$ of the endpoints of~$k$ and glue $\partial  B_0$ to
$\partial B_1$ along an orientation-reversing homeomorphism carrying
the point $k\cap \partial B_0$ to the point $k\cap \partial B_1$.
Then~$k^\circ$ is the image of   $k\cap (\Sigma \setminus \Int
(B_0\cup B_1))$ under this gluing. A similar construction applies to
multi-knotoids, where the genus of the ambient surface increases by
 the number of interval components. In particular, any knotoid
 in~$S^2$ determines an oriented knot in $ S^1\times S^1 \times [0,1]$.

 4.     The notion of a finite type invariant of knots directly extends
to knotoids. It would be interesting to extend to knotoids other
  knot invariants:  the Kontsevich integral, the colored
Jones polynomials,      the Heegaard-Floer homology, etc.

5. For any knot $\kappa \subset  S^3$, we have   $\cro
 (\kappa^\bullet)\leq \cro (\kappa)$. Conjecturally, $\cro
 (\kappa^\bullet)= \cro (\kappa)$. This would follow from the
 stronger conjecture that any minimal diagram of the knotoid $\kappa^\bullet$
 has complexity 0.

\section{Extended bracket polynomial of knotoids}\label{section67}

 \subsection{Polynomial $\langle\langle \, \rangle\rangle_\circ
 $}\label{two-variablepoly} We introduce   a
  2-variable extension of the   bracket polynomial of knotoids. Let $K$ be a knotoid
  diagram in $S^2$. Pick a shortcut $a\subset S^2$ for
  $K$
  (cf.\
  Section \ref{section4.1}).  Given a state~$s\in S(K) $, consider the smoothed 1-manifold $K_s\subset S^2$ and its segment component
  $k_s$. (It is understood that the smoothing of~$K$ is effected in  small neighboroods of the crossings disjoint
  from~$a$.)
  Note that~$k_s$ coincides with~$K$ in a small neighborhood of the endpoints of
  $K$. In particular, the   set   $\partial k_s=\partial a $   consists of the endpoints of
  $K$. We
  orient $K$,
  $k_s$, and $a$ from the leg of $K$ to the head of $K$. Let $k_s\cdot a$ be the algebraic number of intersections of $k_s$ with $a$, that is the number of times
  $k_s$ crosses $a$ from the right to the left
  minus the number of times $k_s$ crosses $a$ from the   left to the
  right (the   endpoints of $k_s$ and $a$ are not counted).
  Similarly, let $K\cdot a$ be the algebraic number of intersections of $K$ with
  $a$. We define a 2-variable Laurent polynomial $\langle\langle K \rangle\rangle_\circ \in \ZZ[A^{\pm 1}, u^{\pm
  1}]$ by
$$\langle\langle K \rangle\rangle_\circ
= (-A^3)^{- w(K)}\, u^{  - K\cdot a}\,  \sum_{s\in S(K)}
A^{\sigma_s}   u^{k_s\cdot a  } (-A^2-A^{-2})^{\vert s\vert -1} .$$
The definition  of $\langle\langle K
  \rangle\rangle_\circ$
  extends  word for word to multi-knotoid diagrams  in~$ S^2$, see
  Section~\ref{kd1-----}.
The following lemma shows
 that  the polynomial $\langle\langle K \rangle\rangle_\circ$ yields an
 invariant of   knotoids and multi-knotoids. This invariant   is
 denoted $\langle\langle \, \rangle\rangle_\circ
 $.

\begin{lemma}\label{le356}  The   polynomial $\langle\langle K \rangle\rangle_\circ$ does not depend on the choice of the shortcut~$a$ and is invariant
 under the   Reidemeister moves on~$K$. \end{lemma}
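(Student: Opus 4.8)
The plan is to isolate the new ingredient. The $A$-dependence of the state sum is exactly the ordinary Kauffman bracket, whose behaviour under the three moves is already recorded in Section~\ref{section5.1}; so the whole content is to show that the extra factor $u^{k_s\cdot a-K\cdot a}$ is transparent to the bracket calculus. Write
\[
\langle\langle K \rangle\rangle_\circ = (-A^3)^{-w(K)}\,u^{-K\cdot a}\,P_a(K),\qquad P_a(K)=\sum_{s\in S(K)}A^{\sigma_s}\,u^{k_s\cdot a}\,(-A^2-A^{-2})^{|s|-1}.
\]
The first step is to observe that $P_a$ obeys the two defining relations of the bracket verbatim. Expanding a crossing by the skein rule inside a small disk disjoint from $a$ gives $P_a(K)=A\,P_a(K_A)+A^{-1}P_a(K_B)$, because smoothing a crossing merely fixes the value of $s$ there and alters no smoothed curve $K_s$, hence no number $k_s\cdot a$. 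If $K$ contains a disjoint circle, that circle is never the segment component, so each $k_s$ is unchanged while $|s|$ rises by one; thus $P_a$ is multiplied by $(-A^2-A^{-2})$. Finally $P_a$ is unchanged under any isotopy of $K$ supported in a disk disjoint from $a$, since such an isotopy touches no crossing data and preserves the algebraic numbers $k_s\cdot a$.

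Next I would prove independence of the shortcut, which is where the factor $u^{-K\cdot a}$ earns its keep. Fix a state $s$ and set $\lambda_s=k_s\cdot a-K\cdot a$. Because the smoothing is performed away from the endpoints, $k_s$ and $K$ coincide near the leg and the head, so the oriented $1$-chain $k_s-K$ is a genuine $1$-cycle supported in the interior and disjoint from both endpoints. Given two shortcuts $a,a'$, the oriented chain $a-a'$ is also a $1$-cycle (both arcs run from the leg to the head). Since $H_1(S^2)=0$, the algebraic intersection of any two $1$-cycles in $S^2$ vanishes; hence $(k_s-K)\cdot(a-a')=0$, that is $k_s\cdot a-K\cdot a=k_s\cdot a'-K\cdot a'$. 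So every exponent $\lambda_s$ is the same for $a$ and $a'$, and as the remaining factors do not involve the shortcut, $\langle\langle K \rangle\rangle_\circ$ is shortcut independent. (Equivalently, passing from $a$ to $a'$ shifts every $k_s\cdot a$ and $K\cdot a$ by the common amount $K\cdot(a-a')$, which $u^{-K\cdot a}$ cancels.)

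For invariance under $\Omega_1,\Omega_2,\Omega_3$ I would exploit shortcut independence. A move occurs in a disk $\Delta$ avoiding the endpoints, and since $S^2-\Int(\Delta)$ is a disk containing the leg and the head, I may choose $a$ to lie in it, so $a\cap\Delta=\emptyset$. Then $K$ and the transformed diagram $K'$ agree outside $\Delta$ and meet $a$ in the same points, giving $K\cdot a=K'\cdot a$, while the writhe changes exactly as in the classical theory. By the three properties of $P_a$ above — the skein relation, the disjoint-circle relation, and invariance under isotopies in disks disjoint from $a$, all of whose uses in the classical argument are local to $\Delta$ — the usual computation of the behaviour of $\langle\,\rangle$ applies word for word to $P_a$: it is invariant under $\Omega_2,\Omega_3$ and is multiplied by $-A^{\pm3}$ under $\Omega_1$. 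Combining with the unchanged factor $u^{-K\cdot a}$ and the compensating $(-A^3)^{-w}$ yields invariance of $\langle\langle K \rangle\rangle_\circ$ under all three moves. The same argument, reading ``segment component'' as ``the union of the interval component and the circle components'', covers multi-knotoid diagrams.

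The point needing the most care — and the conceptual heart of the argument — is the first: that $u^{k_s\cdot a}$ is undisturbed by the skein and circle relations, which is exactly what lets the enhanced bracket inherit every classical invariance proof. This rests on singling out $k_s$ as the \emph{segment} component, so that an adjoined circle only ever feeds the exponent of $(-A^2-A^{-2})$ and never the $u$-exponent. I would verify this explicitly in the one move that actually creates a circle, the bigon cancellation in $\Omega_2$: there the three resolutions carrying the ``horizontal'' reconnection share the same segment component (the new circle sitting inside $\Delta$, off $a$), so they carry a common factor $u^{\mu}$ and cancel precisely as in the unenhanced bracket, while the surviving resolution reproduces the segment component of the original diagram. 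Once this transparency is checked, no separate treatment of $\Omega_3$ is required, as its invariance follows formally from that of $\Omega_2$ together with the skein relation.
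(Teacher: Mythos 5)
Your proof is correct, and while the second half (Reidemeister invariance) follows the paper's route exactly --- first establish shortcut independence, then choose the shortcut $a$ outside the disk of the move and run the classical Kauffman computation, noting that $K\cdot a$ and the $u$-exponents pass through untouched --- the first half is genuinely different. The paper proves independence of the shortcut by invoking the fact (from Section \ref{section4.1}) that any two shortcuts are isotopic through embedded arcs connecting the endpoints, and then checking three elementary tangency moves of $a$ relative to $K$: pulling $a$ across a strand, across a double point, and adding a curl near an endpoint; the first two preserve $K\cdot a$ and each $k_s\cdot a$, while the third preserves only the differences $k_s\cdot a - K\cdot a$, which is exactly what the normalizing factor $u^{-K\cdot a}$ compensates. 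You instead observe that $k_s-K$ and $a-a'$ are $1$-cycles (the former pushed off the endpoints, since $k_s$ and $K$ agree there) and that the intersection pairing of $1$-cycles in $S^2$ vanishes because $H_1(S^2)=0$, so $k_s\cdot a - K\cdot a = k_s\cdot a' - K\cdot a'$ for \emph{every} state $s$. This buys several things: it avoids the isotopy classification of shortcuts and the case analysis altogether; it proves the slightly stronger statement that each individual state exponent, not merely the total sum, is canonical; and it makes visible why the construction is specific to $S^2$ --- on a surface with $H_1(\Sigma)\neq 0$ two shortcuts need not be homologous rel endpoints and the argument (correctly) breaks down. The paper's version, by contrast, localizes the dependence on $a$ into three concrete pictures and identifies precisely which move forces the $u^{-K\cdot a}$ normalization. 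Your explicit check that the three "horizontal" resolutions in the $\Omega_2$ cancellation share one segment component, so that the $u$-weights factor out of the classical cancellation, is a detail the paper leaves inside the phrase "the standard argument", and it is worth having on record.
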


\begin{proof} As we know, any two shortcuts for $K$ are isotopic in the class of embedded arcs in $S^2$ connecting the endpoints of $K$.
Therefore, to verify the independence of $a$, it is enough to
analyze the following  three  local transformations of~$a$:

(1) pulling   $a$ across a strand of $K$ (this adds two   points to
$a\cap K$);

(2)   pulling   $a$ across a double point of $ K$;

(3) adding a   curl to $a$ near an endpoint of $K$ (this adds a
  point to  $a\cap K$).

  The transformations (1) and (2) preserve the numbers $K\cdot a$ and $   k_s\cdot
  a$ for all states $s$ of $K$. The transformation (3) preserves    $   k_s\cdot
  a - K\cdot a$ for all   $s$. Hence,  $\langle\langle K \rangle\rangle_\circ$ is preserved under these transformations and does not depend on~$a$.

Consider  the   \lq\lq unnormalized"  version $\langle\langle K, a
 \rangle\rangle $ of $\langle\langle K
 \rangle\rangle_\circ$ obtained by deleting   the factor $(-A^3)^{- w(K)}\, u^{  - K\cdot a}$.  The polynomial $\langle\langle
 K, a
 \rangle\rangle $ depends on $a$ (hence the notation) but does not depend on the orientation
 of
 $K$ (to compute $k_s\cdot a$ one needs only to remember which endpoint is the leg and which one is the head). The polynomial $\langle\langle
 K, a
 \rangle\rangle $
 satisfies Kauffman's recursive relation   \begin{equation}\label{kau} \langle\langle
 K, a
 \rangle\rangle =A \langle\langle K_A, a
 \rangle\rangle +A^{-1} \langle\langle K_B, a
 \rangle\rangle ,\end{equation}
 where   $K_A$ is obtained from $K$ by the A-smoothing at
 a certain crossing and~$K_B$ is obtained from $K$ by the B-smoothing at
 the same crossing. Here the diagrams $K$, $K_A$, $K_B$   are unoriented and share the same leg and   head. (At least one of these
  diagrams  has a circle component so  that Formula~\eqref{kau} necessarily involves
    multi-knotoids.)
 The standard argument based on \eqref{kau} shows that   $\langle\langle
 K, a
 \rangle\rangle $ is invariant  under the second and third Reidemeister moves on~$K$
 and is multiplied by $(-A^3)^{\pm 1}$ under the first Reidemeister
 moves provided these moves proceed away from $a$. Such moves also preserve the number $K\cdot a$ and therefore they preserve  $\langle\langle K \rangle\rangle_\circ$.
 Since the
polynomial  $\langle\langle K \rangle\rangle_\circ$  does not depend
on~$a$, it is invariant under all Reidemeister moves on $K$.
\end{proof}

 \subsection{Special values}\label{spevayspans} For any knotoid $k$
 in $S^2$,
 $$\langle\langle k \rangle\rangle_\circ (A, u=1)=\langle k
 \rangle_\circ  ,$$
 $$\langle\langle k \rangle\rangle_\circ (A, u=-A^3)=\langle k_-
 \rangle_\circ  \quad {\text {and}} \quad \langle\langle k \rangle\rangle_\circ (A, u=-A^{-3})=\langle k_+
 \rangle_\circ  .$$
 These formulas show that the polynomial $\langle\langle k \rangle\rangle_\circ$ interpolates between the normalized bracket polynomials of $k$, $ k_-$,
  and $k_+$. The first formula is obvious and the other two are obtained by
 applying \eqref{kau} to all crossings of $K$ viewed as crossings of   $K\cup a$.
  This reduces the computation of $\langle\langle k \rangle\rangle_\circ (A,
 -A^{\pm 3})$ to the computation of the  bracket polynomial of
 the diagram of an unknot formed by the arcs  $k_s$ and $a$, where $k_s$ passes everywhere over (resp.\ under) $a$.
 The latter polynomial   is equal to $(-A^3)^{\pm {k_s\cdot a}}$.

 For example,
 $\langle\langle \varphi
\rangle\rangle_\circ= A^4+ (A^6-A^{10})u^2 $.  The substitutions
$u=1$, $u=-A^3$, and $u=-A^{-3}$ produce the normalized bracket
polynomial  of $\varphi$, of the  left-handed  trefoil, and of the
unknot, respectively.

Note finally that if a knotoid $k$ is a knot, then $\langle\langle k
\rangle\rangle_\circ = \langle k  \rangle_\circ \in \ZZ[A^{\pm 1}]$.

 \subsection{The $A$-span and the
 $u$-span}\label{two-variablepolyspans} For a polynomial $F \in \ZZ[A^{\pm 1}, u^{\pm
  1}]$, we   define two  numbers $\spn_A(F)$ and   $\spn_u(F)$. Let us expand $F $ as a finite sum $\sum_{i, j\in \ZZ} F_{i,j} A^i u^j$, where $F_{i,j}\in \ZZ$. If $F\neq 0$, then
 $\spn_A(F) =i_+-i_-$, where $i_+$ (resp.\ $i_-$) is the maximal (resp.\ the
minimal) integer~$i$ such that $F_{i,j}\neq 0$ for some $j$.
Similarly, $\spn_u(F) =j_+-j_-$, where $j_+$ (resp.\ $j_-$) is the
maximal (resp.\ the minimal) integer~$j$ such that $F_{i,j}\neq 0$
for some $i$. By definition,  $\spn_A(0) =\spn_u(0) =-\infty$.

 For
a knotoid $k$ in $S^2$, set $\spn_A(k)=\spn_A(\langle\langle k
\rangle\rangle_\circ)$ and $\spn_u(k)=\spn_u(\langle\langle k
\rangle\rangle_\circ)$. Both these numbers are even (non-negative)
integers. Clearly, \begin{equation}\label{inn} \spn(k)\leq \spn_A(k)
\leq 4 \cro (k)\quad {\text {and}}\quad \spn_u(k) \leq 2 c
(k),\end{equation} where the first two inequalities are obvious and
the third inequality is proven similarly to \eqref{cc27}. For
example, $\spn_A(\varphi)=\spn (\varphi)=6$ and $\spn_u(\varphi)=2$.
Here two of the inequalities \eqref{inn}  are equalities.

 \subsection{The skein relation}\label{spevayspans+} The polynomial $\langle\langle
 \,
 \rangle\rangle_\circ$ satisfies the skein relation
\begin{equation}\label{ske} -A^4 \langle\langle K_+ \rangle\rangle_\circ
+A^{-4}\langle\langle K_- \rangle\rangle_\circ
 =(A^2-A^{-2}) \langle\langle K_0 \rangle\rangle_\circ \end{equation} similar to the
  skein relation for the Jones polynomial. Here $ K_+$, $
 K_-$, and $
 K_0$ are any  multi-knotoid diagrams in
 $S^2$  which are the same except in a small disk where they look
 like a positive crossing, a negative crossing, and a pair of disjoint
 embedded arcs, respectively, see Figure~\ref{fig7}. (We call such a triple $(K_+, K_-, K_0)$ a   {\it Conway
 triple}.) The proof of \eqref{ske} is the same
 as for knots, see \cite{ka}, \cite{li}.

 \begin{figure}[h,t]
\includegraphics[width=8cm,height=2.5cm]{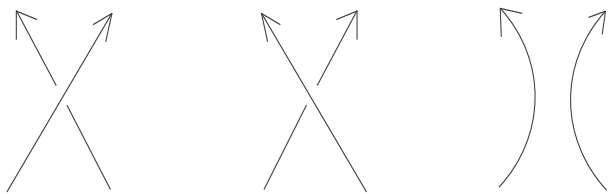}
\caption{A Conway triple in a disk}\label{fig7}
\end{figure}

\section{The skein algebra of knotoids}\label{section67ppp}

 \subsection{The algebra    $ \BB$} In analogy with skein  algebras of knots, we  define   a
skein algebra  of knotoids in~$S^2$.
  Let $\GK$ be the set of isotopy classes of multi-knotoids in $S^2$. Consider the Laurent polynomial ring $\Lambda=\ZZ[q^{\pm 1}, z^{\pm
 1}]$ and the free $\Lambda$-module $\Lambda [\GK]$ with basis $\GK$. Let $\BB$ be
the quotient of  $\Lambda [\GK]$    by
 the submodule generated by all vectors   $qK_+-q^{-1} K_- -z
 K_0$, where $(K_+, K_-, K_0)$ runs over the Conway triples of
 multi-knotoids. The obvious multiplication of  multi-knotoids (generalizing multiplication of knotoids)  turns $\BB$ into a $\Lambda$-algebra. The algebra $\BB$ has a unit represented by  the trivial knotoid.
 We will compute this algebra. In particular, we will show that $\BB$ is a commutative  polynomial $\Lambda$-algebra on a countable set of generators.

 To formulate our results, recall
  the definition of the
skein module of an oriented 3-manifold $M$ (see \cite{tu2},
\cite{pr}).
  Let $\Lu$ be the set of isotopy classes of  oriented  links in
 $M$ including the empty link~$\emptyset$. Three  oriented links $\ell_+, \ell_-, \ell_0\subset M$ form a {\it Conway
 triple} if they are   identical outside a
 ball in $ M$ while inside this ball they are as in Figure~\ref{fig7}.  Additionally, the triple $(\emptyset, \emptyset, {\text {a trivial knot}})$ is declared to be a Conway triple.
  The skein module $\Su(M)$ of $M$ is the quotient of the free $\Lambda$-module $\Lambda [\Lu]$ with basis
 $\Lu$  by
 the submodule generated by all vectors   $q\ell_+-q^{-1} \ell_- -z
 \ell_0$, where $(\ell_+, \ell_-, \ell_0)$ runs over the Conway triples in~$M$.

 For  an oriented surface $\Sigma$, the links in $\Sigma\times \RR$ can be represented by link diagrams in~$\Sigma$ in the usual way.
The skein module $\Su(\Sigma\times \RR)$ is a $\Lambda$-algebra with multiplication defined by placing
a diagram of the first link over a diagram of the second  link. The empty link  is the unit of this algebra.

 For  the annulus $A=S^1\times I$, where $I=[0,1]$, the $\Lambda$-algebra $\Au =\Su(A\times \RR) $ was fully computed in~ \cite{tu2}. We briefly recall the relevant results.
 Observe   that $\mathcal A=\oplus_{r\in \ZZ}\, \mathcal A_r$, where
$\mathcal A_r$ is the submodule generated by the  links homological to $r[S^1]$ in  $H_1(A)=\ZZ$. Here  $[S^1]\in H_1(A) $ is the generator determined by the counterclockwise orientation of $S^1$.
Pick a point $p\in S^1$ and for each $r\in \ZZ$, consider an oriented knot diagram in $A$ formed by
 the segment $\{p\}\times I$ and an embedded arc  $\gamma_r\subset A$ leading from $ (p,1) $ to $(p,0)$ and   passing everywhere over $ \{p\}\times I$ (except at the endpoints). The choice of $\gamma_r$ is uniquely
 (up to isotopy in $A$) determined by the condition that the resulting diagram is homological to $r[S^1]$ in  $H_1(A)$. This diagram represents
a vector  $z_r\in \mathcal A_r$. By \cite{tu2}, $\Au$ is a  commutative  polynomial $\Lambda$-algebra on the
 generators $\{z_r\}_{r \neq 0}$.   Note that $z_0=(q-q^{-1}) z^{-1}\in \Lambda\subset \Au_0$ and that  the group of  orientation-preserving  self-homeomorphisms of
$A$ (generated by the Dehn twist about    $S^1\times \{1/2\}$) acts trivially on~$\Au$. The algebra~$\Au$   has been further studied by H.~Morton and his co-authors, see for instance~\cite{mor}.

\begin{theor}  \label{polyP}  The $\Lambda$-algebras $\Au$ and $ \BB$ are isomorphic.
  \end{theor}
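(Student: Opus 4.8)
The plan is to reduce the statement to a purely algebraic observation: over the ring $\Lambda=\ZZ[q^{\pm1},z^{\pm1}]$, any two free commutative polynomial algebras on countably infinite generating sets are isomorphic, since any bijection of the generating sets extends uniquely to an algebra isomorphism. As recorded above (following \cite{tu2}), $\Au$ is exactly the free commutative polynomial $\Lambda$-algebra on the countable set $\{z_r\}_{r\neq 0}$. Hence the entire content of the theorem is the assertion, announced before its statement, that $\BB$ is \emph{also} a free commutative polynomial $\Lambda$-algebra on a countably infinite set of generators; once this is proved, $\Au\cong\BB$ follows at once by matching generators. So I would devote the proof to computing $\BB$.

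To produce a countable generating set for $\BB$, I would first observe that a knot-type knotoid $\kappa^\bullet$ reduces in $\BB$ to a scalar: applying the defining Conway relation $qK_+-q^{-1}K_-=zK_0$ successively at the crossings of a diagram of $\kappa$ unknots $\kappa$ and rewrites $\kappa^\bullet$ as an element of $\Lambda$ times the trivial knotoid (its HOMFLY-type polynomial); similarly a small trivial circle contributes the scalar $(q-q^{-1})z^{-1}$, exactly as $z_0$ does in $\Au$. Combining this with the prime decomposition of Theorem \ref{th2}, every knotoid equals, in $\BB$, a scalar times a monomial in pure prime knotoids, because the central knot factor becomes a scalar by \eqref{center}. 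After reducing the circle components of a general multi-knotoid relative to the segment (again by the Conway relations), one is left with a countable family of elementary generators --- pure prime knotoids and basic winding circles, organized by the winding grading that mirrors the $H_1$-grading $\Au=\oplus_r\Au_r$ --- that generate $\BB$ as a $\Lambda$-algebra.

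The main obstacle is to establish the remaining structural facts --- commutativity and, above all, freeness (absence of relations) --- because these do \emph{not} follow from any naive geometric comparison with $\Au$: knotoid multiplication is concatenation of segments (producing a single strand of larger winding), whereas multiplication in $\Au$ is vertical stacking (producing extra components), so closing up a multi-knotoid into a link in the solid torus $A\times\RR$ gives at best a $\Lambda$-module heuristic and never an algebra map. I would therefore argue intrinsically: commutativity and the triangular nature of the skein relations should be controlled by a filtration of $\BB$ by crossing number (or by the complexity $\cro$), showing that each relation rewrites a diagram in terms of strictly simpler ones, so that the associated graded is commutative and the chosen generators survive to a genuine $\Lambda$-basis of $\BB$ indexed by the same monomials that index the basis of $\Au$. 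Algebraic independence of the generators I would certify using a multiplicative invariant of knotoids, for instance the extended bracket polynomial $\langle\langle\,\rangle\rangle_\circ$ of Section \ref{section67}, whose $u$-grading detects the winding number and separates distinct monomials. Once $\BB$ is shown to be free commutative polynomial on a countably infinite set, the isomorphism $\Au\cong\BB$ is immediate, and I expect the verification of freeness via this basis/invariant argument to be the technically hardest step.
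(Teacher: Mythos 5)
Your plan founders at exactly the step you identify as hardest, and the tool you propose for it cannot work. To show that $\BB$ is a free polynomial algebra you must prove that the monomials in your chosen generators are $\Lambda$-linearly independent, which requires an \emph{infinite} family of independent functionals on $\BB$; a single multiplicative invariant with values in $\ZZ[A^{\pm 1},u^{\pm 1}]$ can never supply this. Indeed, since $\langle\langle\,\rangle\rangle_\circ$ is multiplicative, it sends monomials in your generators to products of fixed two-variable Laurent polynomials, and any three elements of $\ZZ[A^{\pm 1},u^{\pm 1}]$ are already algebraically dependent (the fraction field has transcendence degree $2$ over $\QQ$), so the bracket cannot certify algebraic independence of more than two elements, let alone of a countably infinite generating set; its $u$-span bounds complexity but does not separate distinct monomials. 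Your generating set is also off target: under the isomorphism, the polynomial generators of $\BB$ correspond to the images of the $z_r$, i.e., to multi-knotoids consisting of a trivial overpassing segment together with a circle winding $r$ times --- not to pure prime knotoids, and nothing in Theorem \ref{th2} (which is about the monoid $\GK$, not about linear relations in the skein quotient) prevents the Conway relation from imposing relations among products of pure prime knotoids. What the paper actually uses to prove injectivity is the missing infinite family: for each $N\in\ZZ$ a $\Lambda$-module map $\mu_N:\BB\to\Au$, defined by closing a multi-knotoid diagram with an overpassing arc so that the result is homological to $N[S^1]$, together with the triangularity $\mu_N\psi(a)=z_{N-r}\,a$ for $a\in\Au_r$; letting $N$ run over all integers then kills any element of $\Ker\psi$ degree by degree.

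Moreover, the premise on which you base your ``intrinsic'' detour is false: there \emph{is} a natural geometric algebra map, and it is the paper's proof. After normalizing so that multi-knotoid diagrams live in the annulus $A=S^1\times I$ with leg and head on the two boundary circles, the assignment $L\mapsto L_p=L\cup(\{p\}\times I)$ --- adjoin a vertical segment passing everywhere over the link diagram --- carries Conway triples to Conway triples, and it is multiplicative because the product in $\Au$ may be realized by radial stacking of disjoint sub-annuli, which under $L\mapsto L_p$ becomes precisely concatenation of multi-knotoids. (You are right only that the opposite direction, closing a multi-knotoid into a link, gives module maps and not algebra maps; that is why the $\mu_N$ serve only to prove injectivity.) Surjectivity of $\psi:\Au\to\BB$ is then the part of your sketch that is sound: induction on crossing number via ascending diagrams, each non-ascending diagram being rewritten by the skein relation in terms of an ascending one and diagrams with fewer crossings. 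So the workable route is the reverse of yours: rather than computing $\BB$ abstractly and invoking an uncanonical matching of generating sets, one transports the known structure of $\Au$ through the explicit isomorphism $\psi$.
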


  \begin{proof}  We    call   multi-knotoid diagrams in
$ S^2=\RR^2\cup \{\infty\}$  with leg   $0$ and  head~$\infty$ {\it
special}. Any multi-knotoid diagram in $S^2$ is isotopic to a
special one. If two special diagrams are isotopic in $S^2$, then
they are isotopic in the class of special diagrams. Therefore, to
compute $\BB$ it is enough to  use only  special   diagrams.

We can  cut from any special  multi-knotoid diagram   in $S^2$ small
open regular neighborhoods of the endpoints. The remaining part of
$S^2$  can be identified with   $A=S^1\times I$.
  This
  allows us to switch from  the language of  special multi-knotoid diagrams in $S^2$  to the language of  multi-knotoid diagrams  in
      $  A$ whose legs and   heads   lie respectively on the boundary circles $S^1\times \{0\}$ and $S^1\times \{1\}$.
      The latter diagrams are considered up to the Reidemeister moves and isotopy in    $A$.
      Note that the isotopy may move the legs and the heads on $\partial A$;
      as a consequence  there is no well-defined rotation number  (or winding number) of a diagram.

Every oriented link diagram $L$ in $ A$ determines (possibly after
slight deformation) a multi-knotoid diagram $L_p=L\cup (\{p\}\times
I)$ in~$A$, where~$\{p\}\times I$ passes everywhere over $L$. The
Reidemeister moves and isotopies on $L$ are translated into the
Reidemeister moves and isotopies on $L_p$. Therefore the formula
$L\mapsto L_p$ defines a map from the set   of isotopy classes of
oriented links in $ A\times \RR$ to the set~$\GK$ of multi-knotoids
in~$S^2$. This map carries Conway triples of links to Conway triples
of multi-knotoids and induces   a $\Lambda$-homomorphism
$\psi:\Au\to \BB$.

We claim that  $\psi $ is an isomorphism. We first establish the
surjectivity. Let us call a   multi-knotoid   diagram $K $   {\it
ascending} if the segment component $C_K$ of~$K$ lies everywhere
over the other components, and moving along $C_K$ from the leg to
the head we first encounter every self-crossing   of $C_K$ as  an
underpass. Any  ascending diagram~$K$ in $A$ can be transformed by
the Reidemeister moves and isotopy of~$C_K$ into a multi-knotoid
diagram of type~$L_p$ as above. Hence, the generators of $\BB$
represented by the ascending diagrams lie in  $\psi(\Au)$.
  Given a  non-ascending
multi-knotoid diagram~$K\subset A$ with $m$ crossings,    we can
change its overcrossings to undercrossings in a unique way to obtain
an ascending diagram~$K'$. Changing   one crossing  at a time and
using the skein relation, we can recursively expand  $K$   as a
linear combination of     $K'$ and     diagrams with $<m$ crossings.
This shows by induction on $m$ that the generator of $\BB$
represented by $K$ lies in~$\psi(\Au)$. Hence, $\psi$ is surjective.

One may use a similar method to prove the injectivity of $\psi$. The
idea is to  define a map $\BB\to \Au$  by   $K\mapsto K-C_K$ on the
ascending diagrams  and then   extend this   to arbitrary
multi-knotoid diagrams using the recursive expansion above.  The
difficult part is to show that this gives a well defined map $\BB\to
\Au$. Then it is easy  to show that this map    is   inverse to
$\psi$. This approach is similar to the   Lickorish-Millett
construction of HOMFLYPT, see~\cite{li}.

We give  another proof of the injectivity of $\psi$.   We   define
for any integer $N$ a homomorphism $\mu_N:\BB\to \Au$ as follows.
Given a multi-knotoid diagram~$K$ in~$A$, we connect the endpoints
of $K$ by an embedded arc $\gamma=\gamma_{K,N} \subset A$ such that
$K\cup \gamma $ is homological to $N[S^1]\in H_1(A)$. Here the
orientation of $K\cup \gamma $ extends the one of $K$. Note   that
such arc $\gamma $ always exists and is unique up to isotopy
constant on $\partial \gamma$. We turn $K\cup \gamma $ into a
diagram of an oriented link by declaring that $\gamma $ passes
everywhere over $K$ (except at the endpoints). The isotopy class of
this link is preserved under the Reidemeister moves and isotopy of
$K$ in $A$. Moreover, the transformation $K\mapsto K\cup
\gamma_{K,N}$ carries Conway triples of multi-knotoid diagrams in
$A$ to Conway triples of links in $A\times \RR$. Therefore this
transformation defines a $\Lambda$-homomorphism $\mu_N:\BB\to \Au$.
 It follows from the  definitions that
\begin{equation}\label{rops}\mu_N \psi(a)=z_{N-r}\,  a \end{equation}
for any $r \in \ZZ$ and any $a\in \mathcal A_r$.

Every vector   $a\in \Ker \psi$  expands as $a=\sum_{r\in \ZZ} \, a_r$, where $a_r\in \mathcal A_r$ for all $r$.
Formula~\eqref{rops}   implies that
$\sum_{r} \,  z_{N-r}\, a_r=0$ for all $N\in \ZZ$. Recall that each $a_r$ is a polynomial in the generators
$\{z_s\}_{s\neq 0}$. For any $r_0\in \ZZ$, we can take  $N$  big enough so that  the generator  $z_{N-r_0}$ appears in  the sum $\sum_{r} \,  z_{N-r}\, a_r$  only as the factor in the term
$z_{N-r_0}\, a_{r_0}$. Since this sum is equal to zero,   $a_{r_0}=0$. Thus,   $a=0$ and $\psi$ is an isomorphism.
  \end{proof}

 \subsection{Remarks}\label{khjyu--} 1. Composing the projection $\GK\to \BB$ with   $\psi^{-1}$, we obtain a map $\mathcal P:\GK\to \Au$. This map yields
an   invariant  of multi-knotoids in $S^2$  extending   the HOMFLYPT
polynomial~$P$ of oriented links in $S^3$: if $\ell$ is an oriented
link in $S^3$ and $\ell^\bullet$ is a multi-knotoid in $S^2$
obtained by removing from a diagram of $\ell$ a small subarc
$\alpha$ (disjoint from the crossings), then $\mathcal P
(\ell^\bullet)=P(\ell)\in \Lambda\subset \Au$. Note that
$\ell^\bullet\in \GK$ may depend on the choice of the component of
$\ell$ containing $\alpha$ but depends neither on the choice of
$\alpha$ on this component nor on the choice of the diagram
of~$\ell$. Formula~\eqref{ske}  implies that the polynomial
$\langle\langle\, \rangle\rangle_\circ$  is determined by $\mathcal
P$.

2.    The results of this section can be reformulated in terms of
  theta-links, see Remark \ref{section3.5}.3.    One can define the skein
  relations
  for the theta-links as for links allowing the two strands   in the   relations to lie on
  the link components or on the 0-labeled edge of the theta-curve
  (but not on the $\pm$-labeled edges). The generalization of Theorem \ref{th777}   to multi-knotoids  mentioned in Remark \ref{section3.5}.3 implies that the skein algebra of multi-knotoids $\BB$ is
  isomorphic to the skein algebra of simple theta-links in~$S^3$.

3. One can similarly introduce the   algebras of multi-knotoids (or,
equivalently, of  simple theta-links)  modulo the bracket relation
\eqref{kau} or modulo the 4-term Kauffman skein relation used to
define the 2-variable Kauffman polynomial  of links. The resulting
algebras are
 isomorphic to the corresponding
  skein algebras of the annulus computed in~\cite{tu2}.

\section{Knotoids in $\RR^2$}\label{kd6==ppp}

 Since the knotoid diagrams in $S^2$ are usually drawn in $\RR^2$, it may be useful to  compare the sets $\mathcal K (\RR^2)$ and $\mathcal K (S^2)$.
  The inclusion $\RR^2 \hookrightarrow S^2$
allows us to view any knotoid diagram in $\RR^2$ as a knotoid
diagram in $S^2$ and induces thus an {\it inclusion map} $ \iota:
\mathcal K (\RR^2)\to \mathcal K (S^2)$.
  Given a knotoid in $S^2$, we can represent it by a
normal  diagram and consider the equivalence class of this diagram
in $\mathcal K (\RR^2)$. This defines a  map $\rho :\mathcal K
(S^2)\to \mathcal K (\RR^2)$. Clearly, $\iota \circ \rho=\id$ so
that $\iota$ is surjective.

As in Sections~\ref{section0} and~\ref{section1}, we have three
 basic involutions ${\rm rev}$, ${\rm sym}$, and ${\rm mir} $ on $\mathcal K (\RR^2)$. The maps $\iota$ and $\rho$   are
equivariant with respect to these involutions.

We now give examples of non-trivial knotoids in $\RR^2$  that are
trivial
  in $S^2$, i.e., are carried  by   $\iota $ to the trivial knotoid in~$S^2$.  Thus,   $\iota$ is not
  injective.

    Figure~\ref{fig1} represents a knotoid $U \in
\mathcal K (\RR^2)$ and its images under the basic involutions.
These knotoids are called {\it unifoils}.  Note  that $ ({\rm sym}
\circ {\rm mir} \circ {\rm rev}) (U)=U$.  Using isotopy and
$\Omega_1$, one easily observes that the unifolis
 are  trivial in~$S^2$.

Figure~\ref{fig2} represents two  knotoids $B_1,  B_2 \in \mathcal K
(\RR^2)$. These knotoids   and their images under the basic
involutions are called {\it bifoils}.
 As an exercise, the reader may     check that   ${\rm rev} (B_1)=B_1$,
${\rm rev}(B_2)= {\rm mir} (B_2)$, and  $ B_2 $  is trivial in
$S^2$.

We claim that   the unifoils and the bifoils are non-trivial
knotoids (in $\RR^2$). To prove this claim, we
 define for knotoids  in $\RR^2$  a $3$-variable polynomial
  $[\, ]_\circ $ with values in the ring $\ZZ[A^{\pm 1},
u^{\pm 1},v]$.   Given a state $s\in S(K)$ on a knotoid diagram
$K\subset \RR^2$, every circle component  of the 1-manifold $K_s $
bounds a disk in~$\RR^2$. This disk may either be disjoint from the
  segment component of $K_s$ or contain  this segment component.
Let $p_s$ (resp.\ $q_s$) be the number of circle components of~$K_s$
of the first (resp.\ the second) type. Clearly, $p_s+q_s=\vert
s\vert -1$. Set  $$[K]_\circ = (-A^3)^{- w(K)}\, u^{  - K\cdot a}\,
\sum_{s\in S(K)} A^{\sigma_s} u^{k_s\cdot a  } (-A^2-A^{-2})^{p_s}
v^{q_s} .$$   Standard computations show that  this   is an
invariant of knotoids in~$\RR^2$.  The polynomial $[K]_\circ $ is
invariant
 under the reversion of knotoids and   changes
 via
  $A\mapsto A^{-1}$ under mirror reflection and    symmetry
  in~$\RR^2$. For $v=-A^2-A^{-2}$, we recover the
  polynomial $\langle \langle\, \rangle \rangle_\circ$ from Section \ref{two-variablepoly}.

  Direct computations show that
  $[ U ]_\circ=-A^{4} -A^{2}v$,
 $$  [ B_1 ]_\circ= A^{4}+ 2A^6u^2 +A^{8}u^2v\quad {\text {and}} \quad
 [ B_2 ]_\circ= (A^{2} +A^{-2}+v) u^2 +1 .$$
 Therefore the  knotoids
 $U$, $B_1$, $B_2$ are non-trivial and mutually distinct.

Figure~\ref{fig4} represents a knotoid $\varphi \in \mathcal K
(\RR^2)$ and its images under the  involutions ${\rm sym}$  and
${\rm mir}$. It is easy to see that $\iota(\varphi)=\iota(B_1)$ and
therefore the knotoid $\iota(\varphi) \in  \mathcal K (S^2)$
(denoted $\varphi$ in the previous sections) is invariant under
reversion. As we know, the knotoid  $\iota(\varphi)$ is non-trivial.

                     \end{document}